\author{Dimitri Chikhladze\footnote{The author gratefully acknowledges support
of an International Macquarie University Scholarship.}}
\title{The Tannaka representation theorem for separable Frobenius
functors}
\date{}
\newcommand{\pocorner}{\hbox to 10pt{{\vrule height10pt depth0pt width0.5pt}%
    \vbox to 10pt{{\hrule height0.5pt width9.5pt depth0pt}\vfill}}}
\newcommand{\@poexcursion}[1]{\save[]-<0pt+#1,-12pt>*{\pocorner}\restore}
\newcommand{\pbcorner}{\vbox to 0pt{\kern 5pt\hbox to 0pt{\kern 5pt%
      \vbox{{\hrule height0.5pt width9.5pt depth0pt}}%
      {\vrule height10pt depth0pt width0.5pt}\hss}\vss}}
\def\@pbexcursion[#1]{\save[]+DR-<16pt,-8pt>+<#1,0pt>*{\pbcorner}\restore}
\newcommand{\pbexcursion}{\@ifnextchar[\@pbexcursion{\@pbexcursion[0pt]}}
\def\arr@fn{\futurelet\arr@next}
\def\arr@dn{\def\arr@next}
\newtoks\arr@toks
\def\addtoarr@toks#1{\arr@toks=\expandafter{\the\arr@toks#1}}
\def\arr@upperlab{}
\def\arr@lowerlab{}
\def\arr@mods{}
\def\arr@uppermods{}
\def\arr@lowermods{}
\newbox\arr@box
\newdimen\arr@dimen
\newdimen\arr@spacer
\newif\ifarr@fixeddim
\def\arr@updatedimen#1{%
  \ifarr@fixeddim\else\setbox\arr@box=\hbox{$\m@th\scriptstyle #1$}%
  \ifdim\wd\arr@box>\arr@dimen \arr@dimen=\wd\arr@box\fi\fi}
\def\parsearr@@{%
    \ifx\space@\arr@next \expandafter\arr@dn\space{\arr@fn\parsearr@@}%
    \else\ifx ^\arr@next \arr@dn ^##1{\def\arr@upperlab{^{##1}}\arr@fn\parsearr@@}%
    \else\ifx _\arr@next \arr@dn _##1{\def\arr@lowerlab{_{##1}}\arr@fn\parsearr@@}%
    \else\ifx <\arr@next \arr@dn <##1>{\arr@fixeddimtrue\arr@dimen=##1%
      \arr@fn\parsearr@@}
    \else\addAT@\ifx\arr@next \addAT@\arr@dn[##1]{\def\arr@mods{##1}\arr@fn\parsearr@@}%
    \else\ifarr@fixeddim\else\arr@updatedimen{\arr@upperlab}%
        \arr@updatedimen{\arr@lowerlab}\advance\arr@dimen by \arr@spacer\fi%
      \arr@toks={\xy<0pt,0pt>*+{}\ar}%
      \expandafter\addtoarr@toks\expandafter{\arr@mods}%
      \expandafter\addtoarr@toks\expandafter{\arr@upperlab}%
      \expandafter\addtoarr@toks\expandafter{\arr@lowerlab}%
      \addtoarr@toks{<\arr@dimen,0pt>*+{}\endxy}%
      \arr@dn{\the\arr@toks}%
    \fi\fi\fi\fi\fi\arr@next}
\newcommand{\arrow}{%
  \arr@fixeddimfalse\arr@dimen=0pt\def\arr@upperlab{}\def\arr@lowerlab{}%
  \def\arr@mods{}\arr@fn\parsearr@@}
\def\parsearrpair@@{%
    \ifx\space@\arr@next \expandafter\arr@dn\space{\arr@fn\parsearrpair@@}%
    \else\ifx ^\arr@next \arr@dn ^##1{\def\arr@upperlab{^{##1}}\arr@fn\parsearrpair@@}%
    \else\ifx _\arr@next \arr@dn _##1{\def\arr@lowerlab{_{##1}}\arr@fn\parsearrpair@@}%
    \else\ifx <\arr@next \arr@dn <##1>{\arr@fixeddimtrue\arr@dimen=##1%
      \arr@fn\parsearrpair@@}
    \else\addAT@\ifx\arr@next \addAT@\arr@dn[{\arr@fn\parsearrpair@@@}%
    \else\ifarr@fixeddim\else\arr@updatedimen{\arr@upperlab}%
        \arr@updatedimen{\arr@lowerlab}\advance\arr@dimen by \arr@spacer\fi%
      \arr@toks={\xy<0pt,0pt>*+{}="a", <\arr@dimen,0pt>*+{}="b"}%
      \expandafter\addtoarr@toks\expandafter{\addAT@\ar<2pt>}%
      \expandafter\addtoarr@toks\expandafter{\arr@uppermods}%
      \addtoarr@toks{"a";"b"}%
      \expandafter\addtoarr@toks\expandafter{\arr@upperlab}%
      \expandafter\addtoarr@toks\expandafter{\addAT@\ar<-2pt>}%
      \expandafter\addtoarr@toks\expandafter{\arr@lowermods}%
      \addtoarr@toks{"a";"b"}%
      \expandafter\addtoarr@toks\expandafter{\arr@lowerlab}%
      \addtoarr@toks{\endxy}\arr@dn{\the\arr@toks}%
    \fi\fi\fi\fi\fi\arr@next}
\def\parsearrpair@@@{%
  \ifx u\arr@next \arr@dn u##1]{\def\arr@uppermods{##1}\arr@fn\parsearrpair@@}%
  \else\arr@dn l##1]{\def\arr@lowermods{##1}\arr@fn\parsearrpair@@}%
  \fi\arr@next}
\newcommand{\arrowpair}{%
  \arr@fixeddimfalse\arr@dimen=0pt\def\arr@upperlab{}\def\arr@lowerlab{}%
  \def\arr@uppermods{}\def\arr@lowermods{}\arr@fn\parsearrpair@@}
\newcommand{\epi}{\arrow@[@{->>}]}
\newcommand{\cover}{\arrow@[@{-|>}]}
\newcommand{\spanarr}{\arrow@[|-@{|}]}
\newcommand{\inc}{\arrow@[@{u(->}]}
\newcommand{\mapto}{\arrow@[@{|->}]}
\newcommand{\mono}{\arrow@[@{ >->}]}
\newcommand{\mat}{\arrow@[|-*{\object@{o}}]}
\newcommand{\twocell}{\arrow@[@{=>}]}
 \def\dated#1{\def\thedate{#1}}%
 \newdimen\xydashw@@
\newdimen\high%
\newdimen\ul%
\newdimen\wdth%
\def\ratchet#1#2{\ifnum#1<#2\global #1=#2\fi}%
\def\ifnextchar#1#2#3{\let\@tempe%
#1\def\@tempa{#2}\def\@tempb{#3}\futurelet%
    \@tempc\@ifnch}%
\def\@ifnch{\ifx \@tempc \@sptoken \let\@tempd\@xifnch%
      \else \ifx \@tempc \@tempe\let\@tempd\@tempa\else\let\@tempd\@tempb\fi%
      \fi \@tempd}%
\def\:{\let\@sptoken= } \:  
\def\:{\@xifnch} \expandafter\def\: {\futurelet\@tempc\@ifnch}%
\let\ifnextchar\@ifnextchar%
\newdimen\axis \axis=\fontdimen22\textfont2%
\def\scalefactor#1{\ul=#1\ul \X@xbase=#1\X@xbase \Y@ybase=#1\Y@ybase}%
\def\fontscale#1{%
\if#1h\relax%
\font\xydashfont=xydash10 scaled \magstephalf%
\font\xyatipfont=xyatip10 scaled \magstephalf%
\font\xybtipfont=xybtip10 scaled \magstephalf%
\font\xybsqlfont=xybsql10 scaled \magstephalf%
\font\xycircfont=xycirc10 scaled \magstephalf%
\else%
\font\xydashfont=xydash10 scaled \magstep#1%
\font\xyatipfont=xyatip10 scaled \magstep#1%
\font\xybtipfont=xybtip10 scaled \magstep#1%
\font\xybsqlfont=xybsql10 scaled \magstep#1%
\font\xycircfont=xycirc10 scaled \magstep#1%
\fi}%
\def\bfig{\vcenter\bgroup\xy}%
\def\efig{\endxy\egroup}%
\def\car#1#2\nil{#1}%
\def\morphism{\ifnextchar({\morphismp}{\morphismp(0,0)}}%
\def\morphismp(#1){\ifnextchar|{\morphismpp(#1)}{\morphismpp(#1)|a|}}%
\def\morphismpp(#1)|#2|{\ifnextchar/{\morphismppp(#1)|#2|}%
    {\morphismppp(#1)|#2|/>/}}%
\def\morphismppp(#1)|#2|/#3/{%
    \ifnextchar<{\morphismpppp(#1)|#2|/#3/}%
    {\morphismpppp(#1)|#2|/#3/<\default,0>}}%
\def\morphismpppp(#1,#2)|#3|/#4/<#5,#6>[#7`#8;#9]{%
\xend#1\advance \xend by #5%
\yend#2\advance \yend by #6%
\domorphism(#1,#2)|#3|/#4/<#5,#6>[{#7}`{#8};{#9}]}%
\def\domorphism(#1,#2)|#3|/#4/<#5,#6>[#7`#8;#9]{%
\def\next{\car#4.\nil}%
\if@\next\relax%
 \if#3l%
  \ifnum #6>0%
   \POS(#1,#2)*+!!<0ex,\axis>{#7}\ar#4^-{#9} (\xend,\yend)*+!!<0ex,\axis>{#8}%
  \else%
   \POS(#1,#2)*+!!<0ex,\axis>{#7}\ar#4_-{#9} (\xend,\yend)*+!!<0ex,\axis>{#8}%
  \fi%
 \else \if#3m%
    \setbox0\hbox{$#9$}%
   \ifdim \wd0=0pt%
     \POS(#1,#2)*+!!<0ex,\axis>{#7}\ar#4 (\xend,\yend)*+!!<0ex,\axis>{#8}%
   \else%
     \POS(#1,#2)*+!!<0ex,\axis>{#7}\ar#4|-*+<1pt,4pt>{\labelstyle#9}%
       (\xend,\yend)*+!!<0ex,\axis>{#8}%
   \fi%
 \else \if#3r%
  \ifnum #6<0%
   \POS(#1,#2)*+!!<0ex,\axis>{#7}\ar#4^-{#9} (\xend,\yend)*+!!<0ex,\axis>{#8}%
  \else%
   \POS(#1,#2)*+!!<0ex,\axis>{#7}\ar#4_-{#9} (\xend,\yend)*+!!<0ex,\axis>{#8}%
  \fi%
 \else \if#3a%
  \ifnum #5>0%
   \POS(#1,#2)*+!!<0ex,\axis>{#7}\ar#4^-{#9} (\xend,\yend)*+!!<0ex,\axis>{#8}%
  \else%
   \POS(#1,#2)*+!!<0ex,\axis>{#7}\ar#4_-{#9} (\xend,\yend)*+!!<0ex,\axis>{#8}%
  \fi%
 \else \if#3b%
  \ifnum #5<0%
   \POS(#1,#2)*+!!<0ex,\axis>{#7}\ar#4^-{#9} (\xend,\yend)*+!!<0ex,\axis>{#8}%
  \else%
   \POS(#1,#2)*+!!<0ex,\axis>{#7}\ar#4_-{#9} (\xend,\yend)*+!!<0ex,\axis>{#8}%
  \fi%
 \else%
   \POS(#1,#2)*+!!<0ex,\axis>{#7}\ar#4 (\xend,\yend)*+!!<0ex,\axis>{#8}%
 \fi\fi\fi\fi\fi%
\else%
 \if#3l%
  \ifnum #6>0%
   \POS(#1,#2)*+!!<0ex,\axis>{#7}\ar@{#4}^-{#9} (\xend,\yend)*+!!<0ex,\axis>{#8}%
  \else%
   \POS(#1,#2)*+!!<0ex,\axis>{#7}\ar@{#4}_-{#9} (\xend,\yend)*+!!<0ex,\axis>{#8}%
  \fi%
 \else \if#3m%
    \setbox0\hbox{$#9$}%
   \ifdim \wd0=0pt%
     \POS(#1,#2)*+!!<0ex,\axis>{#7}\ar@{#4} (\xend,\yend)*+!!<0ex,\axis>{#8}%
   \else%
     \POS(#1,#2)*+!!<0ex,\axis>{#7}\ar@{#4}|-*+<1pt,4pt>{\labelstyle#9}%
         (\xend,\yend)*+!!<0ex,\axis>{#8}%
   \fi%
 \else \if#3r%
  \ifnum #6<0%
   \POS(#1,#2)*+!!<0ex,\axis>{#7}\ar@{#4}^-{#9} (\xend,\yend)*+!!<0ex,\axis>{#8}%
  \else%
   \POS(#1,#2)*+!!<0ex,\axis>{#7}\ar@{#4}_-{#9} (\xend,\yend)*+!!<0ex,\axis>{#8}%
  \fi%
 \else \if#3a%
  \ifnum #5>0%
   \POS(#1,#2)*+!!<0ex,\axis>{#7}\ar@{#4}^-{#9} (\xend,\yend)*+!!<0ex,\axis>{#8}%
  \else%
   \POS(#1,#2)*+!!<0ex,\axis>{#7}\ar@{#4}_-{#9} (\xend,\yend)*+!!<0ex,\axis>{#8}%
  \fi%
 \else \if#3b%
  \ifnum #5<0%
   \POS(#1,#2)*+!!<0ex,\axis>{#7}\ar@{#4}^-{#9} (\xend,\yend)*+!!<0ex,\axis>{#8}%
  \else%
   \POS(#1,#2)*+!!<0ex,\axis>{#7}\ar@{#4}_-{#9} (\xend,\yend)*+!!<0ex,\axis>{#8}%
  \fi%
 \else%
   \POS(#1,#2)*+!!<0ex,\axis>{#7}\ar@{#4} (\xend,\yend)*+!!<0ex,\axis>{#8}%
 \fi\fi\fi\fi\fi%
\fi\ignorespaces}%
\def\vect(#1,#2)/#3/<#4,#5>{%
 \xend#1 \yend#2 \advance\xend by #4 \advance\yend by #5%
     \POS(#1,#2)\ar#3 (\xend,\yend)}%
\def\squarepppp(#1,#2)|#3|/#4`#5`#6`#7/<#8>[#9]{%
\xpos#1\ypos#2%
\def\next|##1##2##3##4|{%
 \def\xa{##1}\def\xb{##2}\def\xc{##3}\def\xd{##4}\ignorespaces}%
\next|#3|%
\def\next<##1,##2>{\deltax=##1\deltay=##2\ignorespaces}%
\next<#8>%
\def\next[##1`##2`##3`##4;##5`##6`##7`##8]{%
    \def\nodea{##1}\def\nodeb{##2}\def\nodec{##3}\def\noded{##4}%
    \def\labela{##5}\def\labelb{##6}\def\labelc{##7}\def\labeld{##8}\ignorespaces}%
\next[#9]%
\morphism(\xpos,\ypos)|\xd|/{#7}/<\deltax,0>[\nodec`\noded;\labeld]%
\advance \ypos by \deltay%
\morphism(\xpos,\ypos)|\xb|/{#5}/<0,-\deltay>[\nodea`\nodec;\labelb]%
\morphism(\xpos,\ypos)|\xa|/{#4}/<\deltax,0>[\nodea`\nodeb;\labela]%
 \advance \xpos by \deltax%
\morphism(\xpos,\ypos)|\xc|/{#6}/<0,-\deltay>[\nodeb`\noded;\labelc]%
\ignorespaces}%
\def\square{\ifnextchar({\squarep}{\squarep(0,0)}}%
\def\squarep(#1){\ifnextchar|{\squarepp(#1)}{\squarepp(#1)|alrb|}}%
\def\squarepp(#1)|#2|{\ifnextchar/{\squareppp(#1)|#2|}%
    {\squareppp(#1)|#2|/>`>`>`>/}}%
\def\squareppp(#1)|#2|/#3`#4`#5`#6/{%
    \ifnextchar<{\squarepppp(#1)|#2|/#3`#4`#5`#6/}%
    {\squarepppp(#1)|#2|/#3`#4`#5`#6/<\default,\default>}}%
\def\diamondpppp(#1,#2)|#3|/#4`#5`#6`#7/<#8>[#9]{%
\xpos#1\ypos#2%
\def\next|##1##2##3##4|{%
 \def\xa{##1}\def\xb{##2}\def\xc{##3}\def\xd{##4}\ignorespaces}%
\next|#3|%
\def\next<##1,##2>{\deltax=##1\deltay=##2\ignorespaces}%
\next<#8>%
\def\next[##1`##2`##3`##4;##5`##6`##7`##8]{%
    \def\nodea{##1}\def\nodeb{##2}\def\nodec{##3}\def\noded{##4}%
    \def\labela{##5}\def\labelb{##6}\def\labelc{##7}%
\def\labeld{##8}\ignorespaces}%
\next[#9]%
\advance\ypos\deltay
\morphism(\xpos,\ypos)|\xc|/{#6}/<\deltax,-\deltay>[\nodeb`\noded;\labelc]%
\advance\xpos \deltax
\advance\xpos \deltax
\morphism(\xpos,\ypos)|\xd|/{#7}/<-\deltax,-\deltay>[\nodec`\noded;\labeld]%
\advance\ypos\deltay \advance\xpos -\deltax
\morphism(\xpos,\ypos)|\xa|/{#4}/<-\deltax,-\deltay>[\nodea`\nodeb;\labela]%
\morphism(\xpos,\ypos)|\xb|/{#5}/<\deltax,-\deltay>[\nodea`\nodec;\labelb]%
}
\def\diamondp(#1){\ifnextchar|{\diamondpp(#1)}{\diamondpp(#1)|lrlr|}}%
\def\diamondpp(#1)|#2|{\ifnextchar/{\diamondppp(#1)|#2|}%
    {\diamondppp(#1)|#2|/>`>`>`>/}}%
\def\diamondppp(#1)|#2|/#3`#4`#5`#6/{%
    \ifnextchar<{\diamondpppp(#1)|#2|/#3`#4`#5`#6/}%
    {\diamondpppp(#1)|#2|/#3`#4`#5`#6/<400,400>}}%
\def\ptrianglepppp(#1,#2)|#3|/#4`#5`#6/<#7>[#8]{%
\xpos#1\ypos#2%
\def\next|##1##2##3|{\def\xa{##1}\def\xb{##2}\def\xc{##3}}%
\next|#3|%
\def\next<##1,##2>{\deltax=##1\deltay=##2\ignorespaces}%
\next<#7>%
\def\next[##1`##2`##3;##4`##5`##6]{%
    \def\nodea{##1}\def\nodeb{##2}\def\nodec{##3}%
    \def\labela{##4}\def\labelb{##5}\def\labelc{##6}}%
\next[#8]%
\advance\ypos by \deltay%
\morphism(\xpos,\ypos)|\xa|/{#4}/<\deltax,0>[\nodea`\nodeb;\labela]%
\morphism(\xpos,\ypos)|\xb|/{#5}/<0,-\deltay>[\nodea`\nodec;\labelb]%
\advance\xpos by \deltax%
\morphism(\xpos,\ypos)|\xc|/{#6}/<-\deltax,-\deltay>[\nodeb`\nodec;\labelc]%
\ignorespaces}%
\def\qtrianglepppp(#1,#2)|#3|/#4`#5`#6/<#7>[#8]{%
\xpos#1\ypos#2%
\def\next|##1##2##3|{\def\xa{##1}\def\xb{##2}\def\xc{##3}}%
\next|#3|%
\def\next<##1,##2>{\deltax=##1\deltay=##2\ignorespaces}%
\next<#7>%
\def\next[##1`##2`##3;##4`##5`##6]{%
    \def\nodea{##1}\def\nodeb{##2}\def\nodec{##3}%
    \def\labela{##4}\def\labelb{##5}\def\labelc{##6}}%
\next[#8]%
\advance\ypos by \deltay%
\morphism(\xpos,\ypos)|\xa|/{#4}/<\deltax,0>[\nodea`\nodeb;\labela]%
\morphism(\xpos,\ypos)|\xb|/{#5}/<\deltax,-\deltay>[\nodea`\nodec;\labelb]%
\advance\xpos by \deltax%
\morphism(\xpos,\ypos)|\xc|/{#6}/<0,-\deltay>[\nodeb`\nodec;\labelc]%
\ignorespaces}%
\def\dtrianglepppp(#1,#2)|#3|/#4`#5`#6/<#7>[#8]{%
\xpos#1\ypos#2%
\def\next|##1##2##3|{\def\xa{##1}\def\xb{##2}\def\xc{##3}}%
\next|#3|%
\def\next<##1,##2>{\deltax=##1\deltay=##2\ignorespaces}%
\next<#7>%
\def\next[##1`##2`##3;##4`##5`##6]{%
    \def\nodea{##1}\def\nodeb{##2}\def\nodec{##3}%
    \def\labela{##4}\def\labelb{##5}\def\labelc{##6}}%
\next[#8]%
\morphism(\xpos,\ypos)|\xc|/{#6}/<\deltax,0>[\nodeb`\nodec;\labelc]%
\advance\ypos by \deltay\advance \xpos by \deltax%
\morphism(\xpos,\ypos)|\xa|/{#4}/<-\deltax,-\deltay>[\nodea`\nodeb;\labela]%
\morphism(\xpos,\ypos)|\xb|/{#5}/<0,-\deltay>[\nodea`\nodec;\labelb]%
\ignorespaces}%
\def\btrianglepppp(#1,#2)|#3|/#4`#5`#6/<#7>[#8]{%
\xpos#1\ypos#2%
\def\next|##1##2##3|{\def\xa{##1}\def\xb{##2}\def\xc{##3}}%
\next|#3|%
\def\next<##1,##2>{\deltax=##1\deltay=##2\ignorespaces}%
\next<#7>%
\def\next[##1`##2`##3;##4`##5`##6]{%
    \def\nodea{##1}\def\nodeb{##2}\def\nodec{##3}%
    \def\labela{##4}\def\labelb{##5}\def\labelc{##6}}%
\next[#8]%
\morphism(\xpos,\ypos)|\xc|/{#6}/<\deltax,0>[\nodeb`\nodec;\labelc]%
\advance\ypos by \deltay%
\morphism(\xpos,\ypos)|\xa|/{#4}/<0,-\deltay>[\nodea`\nodeb;\labela]%
\morphism(\xpos,\ypos)|\xb|/{#5}/<\deltax,-\deltay>[\nodea`\nodec;\labelb]%
\ignorespaces}%
\def\Atrianglepppp(#1,#2)|#3|/#4`#5`#6/<#7>[#8]{%
\xpos#1\ypos#2%
\def\next|##1##2##3|{\def\xa{##1}\def\xb{##2}\def\xc{##3}}%
\next|#3|%
\def\next<##1,##2>{\deltax=##1\deltay=##2\ignorespaces}%
\next<#7>%
\def\next[##1`##2`##3;##4`##5`##6]{%
    \def\nodea{##1}\def\nodeb{##2}\def\nodec{##3}%
    \def\labela{##4}\def\labelb{##5}\def\labelc{##6}}%
\next[#8]%
\multiply\deltax by 2%
\morphism(\xpos,\ypos)|\xc|/{#6}/<\deltax,0>[\nodeb`\nodec;\labelc]%
\divide\deltax by 2%
\advance\ypos by \deltay\advance\xpos by \deltax%
\morphism(\xpos,\ypos)|\xa|/{#4}/<-\deltax,-\deltay>[\nodea`\nodeb;\labela]%
\morphism(\xpos,\ypos)|\xb|/{#5}/<\deltax,-\deltay>[\nodea`\nodec;\labelb]%
\ignorespaces}%
\def\Vtrianglepppp(#1,#2)|#3|/#4`#5`#6/<#7>[#8]{%
\xpos#1\ypos#2%
\def\next|##1##2##3|{\def\xa{##1}\def\xb{##2}\def\xc{##3}}%
\next|#3|%
\def\next<##1,##2>{\deltax=##1\deltay=##2\ignorespaces}%
\next<#7>%
\def\next[##1`##2`##3;##4`##5`##6]{%
    \def\nodea{##1}\def\nodeb{##2}\def\nodec{##3}%
    \def\labela{##4}\def\labelb{##5}\def\labelc{##6}}%
\next[#8]%
\advance\ypos by \deltay%
\morphism(\xpos,\ypos)|\xb|/{#5}/<\deltax,-\deltay>[\nodea`\nodec;\labelb]%
\multiply\deltax by 2%
\morphism(\xpos,\ypos)|\xa|/{#4}/<\deltax,0>[\nodea`\nodeb;\labela]%
\advance\xpos by \deltax \divide \deltax by 2%
\morphism(\xpos,\ypos)|\xc|/{#6}/<-\deltax,-\deltay>[\nodeb`\nodec;\labelc]%
\ignorespaces}%
\def\Ctrianglepppp(#1,#2)|#3|/#4`#5`#6/<#7>[#8]{%
\xpos#1\ypos#2%
\def\next|##1##2##3|{\def\xa{##1}\def\xb{##2}\def\xc{##3}}%
\next|#3|%
\def\next<##1,##2>{\deltax=##1\deltay=##2\ignorespaces}%
\next<#7>%
\def\next[##1`##2`##3;##4`##5`##6]{%
    \def\nodea{##1}\def\nodeb{##2}\def\nodec{##3}%
    \def\labela{##4}\def\labelb{##5}\def\labelc{##6}}%
\next[#8]%
\advance \ypos by \deltay%
\morphism(\xpos,\ypos)|\xc|/{#6}/<\deltax,-\deltay>[\nodeb`\nodec;\labelc]%
\advance\ypos by \deltay \advance \xpos by \deltax%
\morphism(\xpos,\ypos)|\xa|/{#4}/<-\deltax,-\deltay>[\nodea`\nodeb;\labela]%
\multiply\deltay by 2%
\morphism(\xpos,\ypos)|\xb|/{#5}/<0,-\deltay>[\nodea`\nodec;\labelb]%
\ignorespaces}%
\def\Dtrianglepppp(#1,#2)|#3|/#4`#5`#6/<#7>[#8]{%
\xpos#1\ypos#2%
\def\next|##1##2##3|{\def\xa{##1}\def\xb{##2}\def\xc{##3}}%
\next|#3|%
\def\next<##1,##2>{\deltax=##1\deltay=##2\ignorespaces}%
\next<#7>%
\def\next[##1`##2`##3;##4`##5`##6]{%
    \def\nodea{##1}\def\nodeb{##2}\def\nodec{##3}%
    \def\labela{##4}\def\labelb{##5}\def\labelc{##6}}%
\next[#8]%
\advance\xpos by \deltax \advance\ypos by \deltay%
\morphism(\xpos,\ypos)|\xc|/{#6}/<-\deltax,-\deltay>[\nodeb`\nodec;\labelc]%
\advance\xpos by -\deltax \advance\ypos by \deltay%
\morphism(\xpos,\ypos)|\xb|/{#5}/<\deltax,-\deltay>[\nodea`\nodeb;\labelb]%
\multiply \deltay by 2%
\morphism(\xpos,\ypos)|\xa|/{#4}/<0,-\deltay>[\nodea`\nodec;\labela]%
\ignorespaces}%
\def\ptrianglep(#1){\ifnextchar|{\ptrianglepp(#1)}{\ptrianglepp(#1)|alr|}}%
\def\ptrianglepp(#1)|#2|{\ifnextchar/{\ptriangleppp(#1)|#2|}%
    {\ptriangleppp(#1)|#2|/>`>`>/}}%
\def\ptriangleppp(#1)|#2|/#3`#4`#5/{%
    \ifnextchar<{\ptrianglepppp(#1)|#2|/#3`#4`#5/}%
    {\ptrianglepppp(#1)|#2|/#3`#4`#5/<\default,\default>}}%
\def\qtrianglep(#1){\ifnextchar|{\qtrianglepp(#1)}{\qtrianglepp(#1)|alr|}}%
\def\qtrianglepp(#1)|#2|{\ifnextchar/{\qtriangleppp(#1)|#2|}%
    {\qtriangleppp(#1)|#2|/>`>`>/}}%
\def\qtriangleppp(#1)|#2|/#3`#4`#5/{%
    \ifnextchar<{\qtrianglepppp(#1)|#2|/#3`#4`#5/}%
    {\qtrianglepppp(#1)|#2|/#3`#4`#5/<\default,\default>}}%
\def\dtrianglep(#1){\ifnextchar|{\dtrianglepp(#1)}{\dtrianglepp(#1)|lrb|}}%
\def\dtrianglepp(#1)|#2|{\ifnextchar/{\dtriangleppp(#1)|#2|}%
    {\dtriangleppp(#1)|#2|/>`>`>/}}%
\def\dtriangleppp(#1)|#2|/#3`#4`#5/{%
    \ifnextchar<{\dtrianglepppp(#1)|#2|/#3`#4`#5/}%
    {\dtrianglepppp(#1)|#2|/#3`#4`#5/<\default,\default>}}%
\def\btrianglep(#1){\ifnextchar|{\btrianglepp(#1)}{\btrianglepp(#1)|lrb|}}%
\def\btrianglepp(#1)|#2|{\ifnextchar/{\btriangleppp(#1)|#2|}%
    {\btriangleppp(#1)|#2|/>`>`>/}}%
\def\btriangleppp(#1)|#2|/#3`#4`#5/{%
    \ifnextchar<{\btrianglepppp(#1)|#2|/#3`#4`#5/}%
    {\btrianglepppp(#1)|#2|/#3`#4`#5/<\default,\default>}}%
\def\Atriangle{\ifnextchar({\Atrianglep}{\Atrianglep(0,0)}}%
\def\Atrianglep(#1){\ifnextchar|{\Atrianglepp(#1)}{\Atrianglepp(#1)|lrb|}}%
\def\Atrianglepp(#1)|#2|{\ifnextchar/{\Atriangleppp(#1)|#2|}%
    {\Atriangleppp(#1)|#2|/>`>`>/}}%
\def\Atriangleppp(#1)|#2|/#3`#4`#5/{%
    \ifnextchar<{\Atrianglepppp(#1)|#2|/#3`#4`#5/}%
    {\Atrianglepppp(#1)|#2|/#3`#4`#5/<\default,\default>}}%
\def\Vtrianglep(#1){\ifnextchar|{\Vtrianglepp(#1)}{\Vtrianglepp(#1)|alb|}}%
\def\Vtrianglepp(#1)|#2|{\ifnextchar/{\Vtriangleppp(#1)|#2|}%
    {\Vtriangleppp(#1)|#2|/>`>`>/}}%
\def\Vtriangleppp(#1)|#2|/#3`#4`#5/{%
    \ifnextchar<{\Vtrianglepppp(#1)|#2|/#3`#4`#5/}%
    {\Vtrianglepppp(#1)|#2|/#3`#4`#5/<\default,\default>}}%
\def\Ctrianglep(#1){\ifnextchar|{\Ctrianglepp(#1)}{\Ctrianglepp(#1)|arb|}}%
\def\Ctrianglepp(#1)|#2|{\ifnextchar/{\Ctriangleppp(#1)|#2|}%
    {\Ctriangleppp(#1)|#2|/>`>`>/}}%
\def\Ctriangleppp(#1)|#2|/#3`#4`#5/{%
    \ifnextchar<{\Ctrianglepppp(#1)|#2|/#3`#4`#5/}%
    {\Ctrianglepppp(#1)|#2|/#3`#4`#5/<\default,\default>}}%
\def\Dtriangle{\ifnextchar({\Dtrianglep}{\Dtrianglep(0,0)}}%
\def\Dtrianglep(#1){\ifnextchar|{\Dtrianglepp(#1)}{\Dtrianglepp(#1)|lab|}}%
\def\Dtrianglepp(#1)|#2|{\ifnextchar/{\Dtriangleppp(#1)|#2|}%
    {\Dtriangleppp(#1)|#2|/>`>`>/}}%
\def\Dtriangleppp(#1)|#2|/#3`#4`#5/{%
    \ifnextchar<{\Dtrianglepppp(#1)|#2|/#3`#4`#5/}%
    {\Dtrianglepppp(#1)|#2|/#3`#4`#5/<\default,\default>}}%
\def\Atrianglepairpppp(#1)|#2|/#3`#4`#5`#6`#7/<#8>[#9]{%
\def\next(##1,##2){\xpos##1\ypos##2}%
\next(#1)%
\def\next|##1##2##3##4##5|{\def\xa{##1}\def\xb{##2}%
\def\xc{##3}\def\xd{##4}\def\xe{##5}}%
\next|#2|%
\def\next<##1,##2>{\deltax=##1\deltay=##2\ignorespaces}%
\next<#8>%
\def\next[##1`##2`##3`##4;##5`##6`##7`##8`##9]{%
 \def\nodea{##1}\def\nodeb{##2}\def\nodec{##3}\def\noded{##4}%
 \def\labela{##5}\def\labelb{##6}\def\labelc{##7}\def\labeld{##8}\def\labele{##9}}%
\next[#9]%
\morphism(\xpos,\ypos)|\xd|/{#6}/<\deltax,0>[\nodeb`\nodec;\labeld]%
\advance\xpos by \deltax%
\morphism(\xpos,\ypos)|\xe|/{#7}/<\deltax,0>[\nodec`\noded;\labele]%
\advance\ypos by \deltay%
\morphism(\xpos,\ypos)|\xa|/{#3}/<-\deltax,-\deltay>[\nodea`\nodeb;\labela]%
\morphism(\xpos,\ypos)|\xb|/{#4}/<0,-\deltay>[\nodea`\nodec;\labelb]%
\morphism(\xpos,\ypos)|\xc|/{#5}/<\deltax,-\deltay>[\nodea`\noded;\labelc]%
\ignorespaces}%
\def\Vtrianglepairpppp(#1)|#2|/#3`#4`#5`#6`#7/<#8>[#9]{%
\def\next(##1,##2){\xpos##1\ypos##2}%
\next(#1)%
\def\next|##1##2##3##4##5|{\def\xa{##1}\def\xb{##2}%
\def\xc{##3}\def\xd{##4}\def\xe{##5}}%
\next|#2|%
\def\next<##1,##2>{\deltax=##1\deltay=##2\ignorespaces}%
\next<#8>%
\def\next[##1`##2`##3`##4;##5`##6`##7`##8`##9]{%
 \def\nodea{##1}\def\nodeb{##2}\def\nodec{##3}\def\noded{##4}%
 \def\labela{##5}\def\labelb{##6}\def\labelc{##7}\def\labeld{##8}\def\labele{##9}}%
\next[#9]%
\advance\ypos by \deltay%
\morphism(\xpos,\ypos)|\xa|/{#3}/<\deltax,0>[\nodea`\nodeb;\labela]%
\morphism(\xpos,\ypos)|\xc|/{#5}/<\deltax,-\deltay>[\nodea`\noded;\labelc]%
\advance\xpos by \deltax%
\morphism(\xpos,\ypos)|\xb|/{#4}/<\deltax,0>[\nodeb`\nodec;\labelb]%
\morphism(\xpos,\ypos)|\xd|/{#6}/<0,-\deltay>[\nodeb`\noded;\labeld]%
\advance\xpos by \deltax%
\morphism(\xpos,\ypos)|\xe|/{#7}/<-\deltax,-\deltay>[\nodec`\noded;\labele]%
\ignorespaces}%
\def\Ctrianglepairpppp(#1)|#2|/#3`#4`#5`#6`#7/<#8>[#9]{%
\def\next(##1,##2){\xpos##1\ypos##2}%
\next(#1)%
\def\next|##1##2##3##4##5|{\def\xa{##1}\def\xb{##2}%
\def\xc{##3}\def\xd{##4}\def\xe{##5}}%
\next|#2|%
\def\next<##1,##2>{\deltax=##1\deltay=##2\ignorespaces}%
\next<#8>%
\def\next[##1`##2`##3`##4;##5`##6`##7`##8`##9]{%
 \def\nodea{##1}\def\nodeb{##2}\def\nodec{##3}\def\noded{##4}%
 \def\labela{##5}\def\labelb{##6}\def\labelc{##7}\def\labeld{##8}\def\labele{##9}}%
\next[#9]%
\advance\ypos by \deltay%
\morphism(\xpos,\ypos)|\xe|/{#7}/<0,-\deltay>[\nodec`\noded;\labele]%
\advance\xpos by -\deltax%
\morphism(\xpos,\ypos)|\xc|/{#5}/<\deltax,0>[\nodeb`\nodec;\labelc]%
\morphism(\xpos,\ypos)|\xd|/{#6}/<\deltax,-\deltay>[\nodeb`\noded;\labeld]%
\advance\ypos by \deltay%
\advance\xpos by \deltax%
\morphism(\xpos,\ypos)|\xa|/{#3}/<-\deltax,-\deltay>[\nodea`\nodeb;\labela]%
\morphism(\xpos,\ypos)|\xb|/{#4}/<0,-\deltay>[\nodea`\nodec;\labelb]%
\ignorespaces}%
\def\Dtrianglepairpppp(#1)|#2|/#3`#4`#5`#6`#7/<#8>[#9]{%
\def\next(##1,##2){\xpos##1\ypos##2}%
\next(#1)%
\def\next|##1##2##3##4##5|{\def\xa{##1}\def\xb{##2}%
\def\xc{##3}\def\xd{##4}\def\xe{##5}}%
\next|#2|%
\def\next<##1,##2>{\deltax=##1\deltay=##2\ignorespaces}%
\next<#8>%
\def\next[##1`##2`##3`##4;##5`##6`##7`##8`##9]{%
 \def\nodea{##1}\def\nodeb{##2}\def\nodec{##3}\def\noded{##4}%
 \def\labela{##5}\def\labelb{##6}\def\labelc{##7}\def\labeld{##8}\def\labele{##9}}%
\next[#9]%
\advance\ypos by \deltay%
\morphism(\xpos,\ypos)|\xc|/{#5}/<\deltax,0>[\nodeb`\nodec;\labelc]%
\morphism(\xpos,\ypos)|\xd|/{#6}/<0,-\deltay>[\nodeb`\noded;\labeld]%
\advance\ypos by \deltay%
\morphism(\xpos,\ypos)|\xa|/{#3}/<0,-\deltay>[\nodea`\nodeb;\labela]%
\morphism(\xpos,\ypos)|\xb|/{#4}/<\deltax,-\deltay>[\nodea`\nodec;\labelb]%
\advance\ypos by -\deltay%
\advance\xpos by \deltax%
\morphism(\xpos,\ypos)|\xe|/{#7}/<-\deltax,-\deltay>[\nodec`\noded;\labele]%
\ignorespaces}%
\def\Atrianglepairp(#1){\ifnextchar|{\Atrianglepairpp(#1)}%
{\Atrianglepairpp(#1)|lmrbb|}}%
\def\Atrianglepairpp(#1)|#2|{\ifnextchar/{\Atrianglepairppp(#1)|#2|}%
    {\Atrianglepairppp(#1)|#2|/>`>`>`>`>/}}%
\def\Atrianglepairppp(#1)|#2|/#3`#4`#5`#6`#7/{%
    \ifnextchar<{\Atrianglepairpppp(#1)|#2|/#3`#4`#5`#6`#7/}%
    {\Atrianglepairpppp(#1)|#2|/#3`#4`#5`#6`#7/<\default,\default>}}%
\def\Vtrianglepairp(#1){\ifnextchar|{\Vtrianglepairpp(#1)}%
{\Vtrianglepairpp(#1)|aalmr|}}%
\def\Vtrianglepairpp(#1)|#2|{\ifnextchar/{\Vtrianglepairppp(#1)|#2|}%
    {\Vtrianglepairppp(#1)|#2|/>`>`>`>`>/}}%
\def\Vtrianglepairppp(#1)|#2|/#3`#4`#5`#6`#7/{%
    \ifnextchar<{\Vtrianglepairpppp(#1)|#2|/#3`#4`#5`#6`#7/}%
    {\Vtrianglepairpppp(#1)|#2|/#3`#4`#5`#6`#7/<\default,\default>}}%
\def\Ctrianglepairp(#1){\ifnextchar|{\Ctrianglepairpp(#1)}%
{\Ctrianglepairpp(#1)|lrmlr|}}%
\def\Ctrianglepairpp(#1)|#2|{\ifnextchar/{\Ctrianglepairppp(#1)|#2|}%
    {\Ctrianglepairppp(#1)|#2|/>`>`>`>`>/}}%
\def\Ctrianglepairppp(#1)|#2|/#3`#4`#5`#6`#7/{%
    \ifnextchar<{\Ctrianglepairpppp(#1)|#2|/#3`#4`#5`#6`#7/}%
    {\Ctrianglepairpppp(#1)|#2|/#3`#4`#5`#6`#7/<\default,\default>}}%
\def\Dtrianglepairp(#1){\ifnextchar|{\Dtrianglepairpp(#1)}%
{\Dtrianglepairpp(#1)|lrmlr|}}%
\def\Dtrianglepairpp(#1)|#2|{\ifnextchar/{\Dtrianglepairppp(#1)|#2|}%
    {\Dtrianglepairppp(#1)|#2|/>`>`>`>`>/}}%
\def\Dtrianglepairppp(#1)|#2|/#3`#4`#5`#6`#7/{%
    \ifnextchar<{\Dtrianglepairpppp(#1)|#2|/#3`#4`#5`#6`#7/}%
    {\Dtrianglepairpppp(#1)|#2|/#3`#4`#5`#6`#7/<\default,\default>}}%
\def\pplace[#1](#2,#3)[#4]{\POS(#2,#3)*+!!<0ex,\axis>!#1{#4}\ignorespaces}%
\def\cplace(#1,#2)[#3]{\POS(#1,#2)*+!!<0ex,\axis>{#3}\ignorespaces}%
\def\pullback#1]#2]{\square#1]\trident#2]\ignorespaces}%
\def\tridentppp|#1#2#3|/#4`#5`#6/<#7,#8>[#9]{%
\def\next[##1;##2`##3`##4]{\def\nodee{##1}\def\labele{##2}%
   \def\labelf{##3}\def\labelg{##4}}%
\next[#9]%
\advance \xpos by -\deltax%
\advance \xpos by -#7\advance \ypos by #8%
\advance\deltax by #7%
\morphism(\xpos,\ypos)|#1|/{#4}/<\deltax,-#8>[\nodee`\nodeb;\labele]%
\advance\deltax by -#7%
\morphism(\xpos,\ypos)|#2|/{#5}/<#7,-#8>[\nodee`\nodea;\labelf]%
\advance\deltay by #8%
\morphism(\xpos,\ypos)|#3|/{#6}/<#7,-\deltay>[\nodee`\nodec;\labelg]%
\ignorespaces}%
\def\trident{\ifnextchar|{\tridentp}{\tridentp|amb|}}%
\def\tridentp|#1|{\ifnextchar/{\tridentpp|#1|}{\tridentpp|#1|/{>}`{>}`{>}/}}%
\def\tridentpp|#1|/#2/{\ifnextchar<{\tridentppp|#1|/#2/}%
  {\tridentppp|#1|/#2/<500,500>}}%
\def\setmorphismwidth#1#2#3#4{%
 \setbox0=\hbox{$#1{\labelstyle#3#3}#2$}#4=\wd0%
 \divide #4 by 2 \divide #4 by \ul%
 \advance #4 by 350 \ratchet{#4}{500}}%
\def\setSquarewidth[#1`#2`#3`#4;#5`#6`#7`#8]{%
 \setmorphismwidth{#1}{#2}{#5}{\topw}%
 \setmorphismwidth{#3}{#4}{#8}{\botw}%
\ratchet{\topw}{\botw}}%
\def\Squarepppp(#1)|#2|/#3/<#4>[#5]{%
 \setSquarewidth[#5]%
 \squarepppp(#1)|#2|/#3/<\topw,#4>[#5]%
\ignorespaces}%
\def\Squarep(#1){\ifnextchar|{\Squarepp(#1)}{\Squarepp(#1)|alrb|}}%
\def\Squarepp(#1)|#2|{\ifnextchar/{\Squareppp(#1)|#2|}%
    {\Squareppp(#1)|#2|/>`>`>`>/}}%
\def\Squareppp(#1)|#2|/#3`#4`#5`#6/{%
    \ifnextchar<{\Squarepppp(#1)|#2|/#3`#4`#5`#6/}%
    {\Squarepppp(#1)|#2|/#3`#4`#5`#6/<\default>}}%
\def\hsquarespppp(#1,#2)|#3|/#4/<#5>[#6;#7]{%
\Xpos=#1\Ypos=#2%
\def\next|##1##2##3##4##5##6##7|{%
 \def\Xa{##1}\def\Xb{##2}\def\Xc{##3}\def\Xd{##4}%
 \def\Xe{##5}\def\Xf{##6}\def\Xg{##7}}%
\next|#3|%
\def\next<##1,##2,##3>{\deltaX=##1\deltaXprime=##2\deltaY=##3}%
\next<#5>%
\def\next[##1`##2`##3`##4`##5`##6]{%
 \def\Nodea{##1}\def\Nodeb{##2}\def\Nodec{##3}%
 \def\Noded{##4}\def\Nodee{##5}\def\Nodef{##6}}%
\next[#6]%
\def\next[##1`##2`##3`##4`##5`##6`##7]{%
 \def\Labela{##1}\def\Labelb{##2}\def\Labelc{##3}\def\Labeld{##4}%
 \def\Labele{##5}\def\Labelf{##6}\def\Labelg{##7}}%
\next[#7]%
\dohsquares/#4/}%
\def\dohsquares/#1`#2`#3`#4`#5`#6`#7/{%
\squarepppp(\Xpos,\Ypos)|\Xa\Xc\Xd\Xf|/#1`#3`#4`#6/<\deltaX,\deltaY>%
 [\Nodea`\Nodeb`\Noded`\Nodee;\Labela`\Labelc`\Labeld`\Labelf]%
 \advance \Xpos by \deltaX%
\squarepppp(\Xpos,\Ypos)|\Xb\Xd\Xe\Xg|/#2``#5`#7/<\deltaXprime,\deltaY>%
[\Nodeb`\Nodec`\Nodee`\Nodef;\Labelb``\Labele`\Labelg]%
\ignorespaces}%
\def\hsquaresp(#1){\ifnextchar|{\hsquarespp(#1)}{\hsquarespp%
(#1)|aalmrbb|}}%
\def\hsquarespp(#1)|#2|{\ifnextchar/{\hsquaresppp(#1)|#2|}%
    {\hsquaresppp(#1)|#2|/>`>`>`>`>`>`>/}}%
\def\hsquaresppp(#1)|#2|/#3/{%
    \ifnextchar<{\hsquarespppp(#1)|#2|/#3/}%
    {\hsquarespppp(#1)|#2|/#3/<\default,\default,\default>}}%
\def\hSquarespppp(#1,#2)|#3|/#4/<#5>[#6;#7]{%
\Xpos=#1\Ypos=#2%
\def\next|##1##2##3##4##5##6##7|{%
 \def\Xa{##1}\def\Xb{##2}\def\Xc{##3}\def\Xd{##4}%
 \def\Xe{##5}\def\Xf{##6}\def\Xg{##7}}%
\next|#3|%
\deltaY=#5%
\def\next[##1`##2`##3`##4`##5`##6]{%
 \def\Nodea{##1}\def\Nodeb{##2}\def\Nodec{##3}%
 \def\Noded{##4}\def\Nodee{##5}\def\Nodef{##6}}%
\next[#6]%
\def\next[##1`##2`##3`##4`##5`##6`##7]{%
 \def\Labela{##1}\def\Labelb{##2}\def\Labelc{##3}\def\Labeld{##4}%
 \def\Labele{##5}\def\Labelf{##6}\def\Labelg{##7}}%
\next[#7]%
\dohSquares/#4/}%
\def\dohSquares/#1`#2`#3`#4`#5`#6`#7/{%
\Squarepppp(\Xpos,\Ypos)|\Xa\Xc\Xd\Xf|/#1`#3`#4`#6/<\deltaY>%
 [\Nodea`\Nodeb`\Noded`\Nodee;\Labela`\Labelc`\Labeld`\Labelf]%
 \advance \Xpos by \topw%
\Squarepppp(\Xpos,\Ypos)|\Xb\Xd\Xe\Xg|/#2``#5`#7/<\deltaY>%
[\Nodeb`\Nodec`\Nodee`\Nodef;\Labelb``\Labele`\Labelg]%
\ignorespaces}%
\def\hSquaresp(#1){\ifnextchar|{\hSquarespp(#1)}{\hSquarespp%
(#1)|aalmrbb|}}%
\def\hSquarespp(#1)|#2|{\ifnextchar/{\hSquaresppp(#1)|#2|}%
    {\hSquaresppp(#1)|#2|/>`>`>`>`>`>`>/}}%
\def\hSquaresppp(#1)|#2|/#3/{%
    \ifnextchar<{\hSquarespppp(#1)|#2|/#3/}%
    {\hSquarespppp(#1)|#2|/#3/<\default>}}%
\def\vsquarespppp(#1,#2)|#3|/#4/<#5>[#6;#7]{%
\Xpos=#1\Ypos=#2%
\def\next|##1##2##3##4##5##6##7|{%
 \def\Xa{##1}\def\Xb{##2}\def\Xc{##3}\def\Xd{##4}%
 \def\Xe{##5}\def\Xf{##6}\def\Xg{##7}}%
\next|#3|%
\def\next<##1,##2,##3>{\deltaX=##1\deltaY=##2\deltaYprime=##3}%
\next<#5>%
\def\next[##1`##2`##3`##4`##5`##6]{%
 \def\Nodea{##1}\def\Nodeb{##2}\def\Nodec{##3}%
 \def\Noded{##4}\def\Nodee{##5}\def\Nodef{##6}}%
\next[#6]%
\def\next[##1`##2`##3`##4`##5`##6`##7]{%
 \def\Labela{##1}\def\Labelb{##2}\def\Labelc{##3}\def\Labeld{##4}%
 \def\Labele{##5}\def\Labelf{##6}\def\Labelg{##7}}%
\next[#7]%
\dovsquares/#4/}%
\def\dovsquares/#1`#2`#3`#4`#5`#6`#7/{%
\squarepppp(\Xpos,\Ypos)|\Xd\Xe\Xf\Xg|/`#5`#6`#7/<\deltaX,\deltaYprime>%
[\Nodec`\Noded`\Nodee`\Nodef;`\Labele`\Labelf`\Labelg]%
 \advance\Ypos by \deltaYprime%
\squarepppp(\Xpos,\Ypos)|\Xa\Xb\Xc\Xd|/#1`#2`#3`#4/<\deltaX,\deltaY>%
 [\Nodea`\Nodeb`\Nodec`\Noded;\Labela`\Labelb`\Labelc`\Labeld]%
\ignorespaces}%
\def\vsquaresp(#1){\ifnextchar|{\vsquarespp(#1)}{\vsquarespp%
(#1)|aalmrbb|}}%
\def\vsquarespp(#1)|#2|{\ifnextchar/{\vsquaresppp(#1)|#2|}%
    {\vsquaresppp(#1)|#2|/>`>`>`>`>`>`>/}}%
\def\vsquaresppp(#1)|#2|/#3/{%
    \ifnextchar<{\vsquarespppp(#1)|#2|/#3/}%
    {\vsquarespppp(#1)|#2|/#3/<\default,\default,\default>}}%
\def\vSquarespppp(#1,#2)|#3|/#4/<#5,#6>[#7;#8]{%
\Xpos=#1\Ypos=#2%
\def\next|##1##2##3##4##5##6##7|{%
 \def\Xa{##1}\def\Xb{##2}\def\Xc{##3}\def\Xd{##4}%
 \def\Xe{##5}\def\Xf{##6}\def\Xg{##7}}%
\next|#3|%
\deltaX=#5%
\deltaY=#6%
\def\next[##1`##2`##3`##4`##5`##6]{%
 \def\Nodea{##1}\def\Nodeb{##2}\def\Nodec{##3}%
 \def\Noded{##4}\def\Nodee{##5}\def\Nodef{##6}}%
\next[#7]%
\def\next[##1`##2`##3`##4`##5`##6`##7]{%
 \def\Labela{##1}\def\Labelb{##2}\def\Labelc{##3}\def\Labeld{##4}%
 \def\Labele{##5}\def\Labelf{##6}\def\Labelg{##7}}%
\next[#8]%
\dovSquares/#4/\ignorespaces}%
\def\dovSquares/#1`#2`#3`#4`#5`#6`#7/{%
\setmorphismwidth{\Nodea}{\Nodeb}{\Labela}{\topw}%
\setmorphismwidth{\Nodec}{\Noded}{\Labeld}{\botw}%
\ratchet{\topw}{\botw}%
\setmorphismwidth{\Nodee}{\Nodef}{\Labelg}{\botw}%
\ratchet{\topw}{\botw}%
\square(\Xpos,\Ypos)|\Xd\Xe\Xf\Xg|/`#5`#6`#7/<\topw,\deltaX>%
 [\Nodec`\Noded`\Nodee`\Nodef;`\Labele`\Labelf`\Labelg]%
\advance \Ypos by \deltaX%
\square(\Xpos,\Ypos)|\Xa\Xb\Xc\Xd|/#1`#2`#3`#4/<\topw,\deltaY>%
 [\Nodea`\Nodeb`\Nodec`\Noded;\Labela`\Labelb`\Labelc`\Labeld]%
}%
\def\vSquaresp(#1){\ifnextchar|{\vSquarespp(#1)}{\vSquarespp%
(#1)|alrmlrb|}}%
\def\vSquarespp(#1)|#2|{\ifnextchar/{\vSquaresppp(#1)|#2|}%
    {\vSquaresppp(#1)|#2|/>`>`>`>`>`>`>/}}%
\def\vSquaresppp(#1)|#2|/#3/{%
    \ifnextchar<{\vSquarespppp(#1)|#2|/#3/}%
    {\vSquarespppp(#1)|#2|/#3/<\default,\default>}}%
\def\osquarepppp(#1)|#2|/#3`#4`#5`#6/<#7>[#8]{\squarepppp%
 (#1)|#2|/#3`#4`#5`#6/<#7>[#8]%
 \let\Nodea\nodea\let\Nodeb\nodeb%
\let\Nodec\nodec\let\Noded\noded\Xpos=\xpos\Ypos=\ypos%
\deltaX=\deltax \deltaY=\deltay \isquare}%
\def\osquarep(#1){\ifnextchar|{\osquarepp(#1)}{\osquarepp(#1)|alrb|}}%
\def\osquarepp(#1)|#2|{\ifnextchar/{\osquareppp(#1)|#2|}%
    {\osquareppp(#1)|#2|/>`>`>`>/}}%
\def\osquareppp(#1)|#2|/#3`#4`#5`#6/{%
    \ifnextchar<{\osquarepppp(#1)|#2|/#3`#4`#5`#6/}%
    {\osquarepppp(#1)|#2|/#3`#4`#5`#6/<1500,1500>}}%
\def\isquarepppp(#1)|#2|/#3`#4`#5`#6/<#7>[#8]{%
 \squarepppp(#1)|#2|/#3`#4`#5`#6/<#7>[#8]%
\ifnextchar|{\cubep}{\cubep|mmmm|}}%
\def\cubep|#1|{\ifnextchar/{\cubepp|#1|}{\cubepp|#1|/>`>`>`>/}}%
\def\isquare{\ifnextchar({\isquarep}{\isquarep(\default,\default)}}%
\def\isquarep(#1){\ifnextchar|{\isquarepp(#1)}{\isquarepp(#1)|alrb|}}%
\def\isquarepp(#1)|#2|{\ifnextchar/{\isquareppp(#1)|#2|}%
    {\isquareppp(#1)|#2|/>`>`>`>/}}%
\def\isquareppp(#1)|#2|/#3`#4`#5`#6/{%
    \ifnextchar<{\isquarepppp(#1)|#2|/#3`#4`#5`#6/}%
    {\isquarepppp(#1)|#2|/#3`#4`#5`#6/<500,500>}}%
\def\cubepp|#1#2#3#4|/#5`#6`#7`#8/[#9]{%
\def\next[##1`##2`##3`##4]{\gdef\Labela{##1}%
\gdef\Labelb{##2}\gdef\Labelc{##3}\gdef\Labeld{##4}}\next[#9]%
\xend\xpos \yend\ypos%
\Xend\xend\advance\Xend by -\Xpos%
\Yend\yend\advance\Yend by -\Ypos%
\domorphism(\Xpos,\Ypos)|#2|/#6/<\Xend,\Yend>[\Nodeb`\nodeb;\Labelb]%
\advance\Xpos by-\deltaX%
\advance\xend by-\deltax%
\Xend\xend\advance\Xend by -\Xpos%
\domorphism(\Xpos,\Ypos)|#1|/#5/<\Xend,\Yend>[\Nodea`\nodea;\Labela]%
\advance\Ypos by-\deltaY%
\advance\yend by-\deltay%
\Yend\yend\advance\Yend by -\Ypos%
\domorphism(\Xpos,\Ypos)|#3|/#7/<\Xend,\Yend>[\Nodec`\nodec;\Labelc]%
\advance\Xpos by\deltaX%
\advance\xend by\deltax%
\Xend\xend\advance\Xend by -\Xpos%
\domorphism(\Xpos,\Ypos)|#4|/#8/<\Xend,\Yend>[\Noded`\noded;\Labeld]%
\ignorespaces}%
\def\setwdth#1#2{\setbox0\hbox{$\labelstyle#1$}\wdth=\wd0%
\setbox0\hbox{$\labelstyle#2$}\ifnum\wdth<\wd0 \wdth=\wd0 \fi}%
\def\topppp/#1/<#2>^#3_#4{\:%
\ifnum#2=0%
   \setwdth{#3}{#4}\deltax=\wdth \divide \deltax by \ul%
   \advance \deltax by \defaultmargin  \ratchet{\deltax}{100}%
\else \deltax #2%
\fi%
\xy\ar@{#1}^{#3}_{#4}(\deltax,0) \endxy%
\:}%
\def\toppp/#1/<#2>^#3{\ifnextchar_{\topppp/#1/<#2>^{#3}}{\topppp/#1/<#2>^{#3}_{}}}%
\def\topp/#1/<#2>{\ifnextchar^{\toppp/#1/<#2>}{\toppp/#1/<#2>^{}}}%
\def\toop/#1/{\ifnextchar<{\topp/#1/}{\topp/#1/<0>}}%
\def\to{\ifnextchar/{\toop}{\toop/>/}}%
\def\rlimto{{%
\font\xyatipfont=xyatip10 scaled 800
\font\xybtipfont=xybtip10 scaled 800
\raise 2pt\hbox{\,\xy\ar@{->}(100,0) \endxy}\,}}
\def\llimto{{%
\font\xyatipfont=xyatip10 scaled 800
\font\xybtipfont=xybtip10 scaled 800
\raise 2pt\hbox{\,\xy\ar@{<-}(100,0) \endxy}\,}}
\def\twopppp/#1`#2/<#3>^#4_#5{\:%
\ifnum0=#3%
  \setwdth{#4}{#5}\deltax=\wdth \divide \deltax by \ul \advance \deltax%
  by \defaultmargin \ratchet{\deltax}{200}%
\else \deltax#3 \fi%
\xy\ar@{#1}@<2.5pt>^{#4}(\deltax,0)%
\ar@{#2}@<-2.5pt>_{#5}(\deltax,0)\endxy\:}%
\def\twoppp/#1`#2/<#3>^#4{\ifnextchar_{\twopppp/#1`#2/<#3>^{#4}}%
  {\twopppp/#1`#2/<#3>^{#4}_{}}}%
\def\twopp/#1`#2/<#3>{\ifnextchar^{\twoppp/#1`#2/<#3>}{\twoppp/#1`#2/<#3>^{}}}%
\def\twop/#1`#2/{\ifnextchar<{\twopp/#1`#2/}{\twopp/#1`#2/<0>}}%
\def\two{\ifnextchar/{\twop}{\twop/>`>/}}%
\def\threeppppp/#1`#2`#3/<#4>^#5|#6_#7{\:%
\ifnum0=#4%
\setbox0\hbox{$\labelstyle#5$}\wdth=\wd0%
\setbox0\hbox{$\labelstyle#6$}\ifnum\wdth<\wd0 \wdth=\wd0 \fi%
\setbox0\hbox{$\labelstyle#7$}\ifnum\wdth<\wd0 \wdth=\wd0 \fi%
\deltax=\wdth \divide \deltax by \ul \advance \deltax by%
\defaultmargin \ratchet{\deltax}{300}%
\else\deltax#4 \fi%
    \xy \ifnum\wd0=0 \ar@{#2}(\deltax,0)%
    \else \ar@{#2}|{#6}(\deltax,0)\fi%
\ar@{#1}@<4.5pt>^{#5}(\deltax,0)%
\ar@{#3}@<-4.5pt>_{#7}(\deltax,0)\endxy\:}%
\def\threepppp/#1`#2`#3/<#4>^#5|#6{\ifnextchar_{\threeppppp%
  /#1`#2`#3/<#4>^{#5}|{#6}}{\threeppppp/#1`#2`#3/<#4>^{#5}|{#6}_{}}}%
\def\threeppp/#1`#2`#3/<#4>^#5{\ifnextchar|{\threepppp%
  /#1`#2`#3/<#4>^{#5}}{\threepppp/#1`#2`#3/<#4>^{#5}|{}}}%
\def\threepp/#1`#2`#3/<#4>{\ifnextchar^{\threeppp/#1`#2`#3/<#4>}%
  {\threeppp/#1`#2`#3/<#4>^{}}}%
\def\threep/#1`#2`#3/{\ifnextchar<{\threepp/#1`#2`#3/}%
  {\threepp/#1`#2`#3/<0>}}%
\def\twoar(#1,#2){{%
 \scalefactor{0.1}%
 \deltax#1\deltay#2%
 \deltaX=\ifnum\deltax<0-\fi\deltax%
 \deltaY=\ifnum\deltay<0-\fi\deltay%
 \Xend\deltax \multiply \Xend by \deltax%
 \Yend\deltay \multiply \Yend by \deltay%
 \advance\Xend by \Yend \multiply \Xend by 3%
 \ifnum \deltaX > \deltaY%
    \multiply \deltaX by 3 \advance \deltaX by \deltaY%
 \else%
    \multiply \deltaY by 3 \advance \deltaX by \deltaY%
 \fi%
 \multiply\deltax by 500%
 \multiply\deltay by 500%
 \xpos\deltax \multiply \xpos by 3 \divide\xpos by \deltaX%
 \Xpos\deltax \multiply \Xpos by \deltaX \divide \Xpos by \Xend%
 \advance \xpos by \Xpos%
 \ypos\deltay \multiply \ypos by 3 \divide\ypos by \deltaX%
 \Ypos\deltay \multiply \Ypos by \deltaX \divide \Ypos by \Xend%
 \advance \ypos by \Ypos%
 \xy \ar@{=>}(\xpos,\ypos) \endxy%
}\ignorespaces}%
\def\iiixiiipppppp(#1,#2)|#3|/#4/<#5>#6<#7>[#8;#9]{%
 \xpos#1\ypos#2\relax%
 \def\next|##1##2##3##4##5##6##7|{\def\xa{##1}\def\xb{##2}%
 \def\xc{##3}\def\xd{##4}\def\xe{##5}\def\xf{##6}\nextt|##7|}%
 \def\nextt|##1##2##3##4##5##6|{\def\xg{##1}\def\xh{##2}%
 \def\xi{##3}\def\xj{##4}\def\xk{##5}\def\xl{##6}}%
 \next|#3|%
 \def\next<##1,##2>{\deltax##1\deltay##2}%
 \next<#5>%
 \def\next<##1,##2>{\deltaX##1\deltaY##2}%
 \next<#7>%
 \def\next##1{\topw##1\relax%
 \ifodd\topw \def\za{}\else\def\za{\relax}\fi \divide\topw by 2
 \ifodd\topw \def\zb{}\else\def\zb{\relax}\fi \divide\topw by 2
 \ifodd\topw \def\zc{}\else\def\zc{\relax}\fi \divide\topw by 2
 \ifodd\topw \def\zd{}\else\def\zd{\relax}\fi \divide\topw by 2
 \ifodd\topw \def\ze{}\else\def\ze{\relax}\fi \divide\topw by 2
 \ifodd\topw \def\zf{}\else\def\zf{\relax}\fi \divide\topw by 2
 \ifodd\topw \def\zg{}\else\def\zg{\relax}\fi \divide\topw by 2
 \ifodd\topw \def\zh{}\else\def\zh{\relax}\fi \divide\topw by 2
 \ifodd\topw \def\zi{}\else\def\zi{\relax}\fi \divide\topw by 2
 \ifodd\topw \def\zj{}\else\def\zj{\relax}\fi \divide\topw by 2
 \ifodd\topw \def\zk{}\else\def\zk{\relax}\fi \divide\topw by 2
 \ifodd\topw \def\zl{}\else\def\zl{\relax}\fi}%
 \next{#6}%
 \def\next[##1`##2`##3`##4`##5`##6`##7`##8`##9]{%
 \def\nodeA{##1}\def\nodeB{##2}\def\nodeC{##3}%
 \def\nodeD{##4}\def\nodeE{##5}\def\nodeF{##6}%
 \def\nodeG{##7}\def\nodeH{##8}\def\nodeI{##9}}%
 \next[#8]%
 \def\next[##1`##2`##3`##4`##5`##6`##7]{%
 \def\labela{##1}\def\labelb{##2}\def\labelc{##3}%
 \def\labeld{##4}\def\labele{##5}\def\labelf{##6}\nextt[##7]}%
 \def\nextt[##1`##2`##3`##4`##5`##6]{%
 \def\labelg{##1}\def\labelh{##2}\def\labeli{##3}%
 \def\labelj{##4}\def\labelk{##5}\def\labell{##6}}%
 \next[#9]%
 \def\next/##1`##2`##3`##4`##5`##6`##7`##8/{%
 \advance\ypos\deltay
    \ifx\zf\empty \morphism(\xpos,\ypos)/<-/<-\deltaX,0>[\nodeD`0;]\fi
 \morphism(\xpos,\ypos)|\xf|/{##6}/<\deltax,0>[\nodeD`\nodeE;\labelf]%
    \advance \xpos\deltax
    \morphism(\xpos,\ypos)|\xg|/{##7}/<\deltax,0>[\nodeE`\nodeF;\labelg]%
    \ifx\zg\empty \advance\xpos \deltax
        \morphism(\xpos,\ypos)<\deltaX,0>[\nodeF`0;]\fi
    \xpos#1 \advance\ypos\deltay
    \ifx\zd\empty \morphism(\xpos,\ypos)/<-/<-\deltaX,0>[\nodeA`0;]\fi
    \ifx\za\empty \morphism(\xpos,\ypos)/<-/<0,\deltaY>[\nodeA`0;]\fi
    \morphism(\xpos,\ypos)|\xa|/{##1}/<\deltax,0>[\nodeA`\nodeB;\labela]%
 \morphism(\xpos,\ypos)|\xc|/{##3}/<0,-\deltay>[\nodeA`\nodeD;\labelc]%
    \advance \xpos\deltax
     \morphism(\xpos,\ypos)|\xb|/{##2}/<\deltax,0>[\nodeB`\nodeC;\labelb]%
     \morphism(\xpos,\ypos)|\xd|/{##4}/<0,-\deltay>[\nodeB`\nodeE;\labeld]%
     \ifx\zb\empty \morphism(\xpos,\ypos)/<-/<0,\deltaY>[\nodeB`0;]\fi
     \advance\xpos\deltax
 \morphism(\xpos,\ypos)|\xe|/{##5}/<0,-\deltay>[\nodeC`\nodeF;\labele]%
     \ifx\zc\empty \morphism(\xpos,\ypos)/<-/<0,\deltaY>[\nodeC`0;]\fi
     \ifx\ze\empty \morphism(\xpos,\ypos)<\deltaX,0>[\nodeC`0;]\fi
   \nextt/##8/}%
 \def\nextt/##1`##2`##3`##4`##5/{%
 \xpos#1\ypos#2\relax%
   \ifx\zh\empty \morphism(\xpos,\ypos)/<-/<-\deltaX,0>[\nodeG`0;]\fi
   \ifx\zj\empty \morphism(\xpos,\ypos)<0,-\deltaY>[\nodeG`0;]\fi
   \morphism(\xpos,\ypos)|\xk|/{##4}/<\deltax,0>[\nodeG`\nodeH;\labelk]%
   \advance\xpos\deltax
   \morphism(\xpos,\ypos)|\xl|/{##5}/<\deltax,0>[\nodeH`\nodeI;\labell]%
   \ifx\zk\empty \morphism(\xpos,\ypos)<0,-\deltaY>[\nodeH`0;]\fi
   \advance\xpos\deltax
   \ifx\zi\empty \morphism(\xpos,\ypos)<\deltaX,0>[\nodeI`0;]\fi
   \ifx\zl\empty \morphism(\xpos,\ypos)<0,-\deltaY>[\nodeI`0;]\fi
   \xpos#1 \advance\ypos\deltay
    \morphism(\xpos,\ypos)|\xh|/{##1}/<0,-\deltay>[\nodeD`\nodeG;\labelh]%
    \advance \xpos\deltax
    \morphism(\xpos,\ypos)|\xi|/{##2}/<0,-\deltay>[\nodeE`\nodeH;\labeli]%
    \advance \xpos\deltax
 \morphism(\xpos,\ypos)|\xj|/{##3}/<0,-\deltay>[\nodeF`\nodeI;\labelj]}%
 \next/#4/\ignorespaces}%
\def\iiixiiip(#1){\ifnextchar|{\iiixiiipp(#1)}%
  {\iiixiiipp(#1)|aalmrmmlmrbb|}}%
\def\iiixiiipp(#1)|#2|{\ifnextchar/{\iiixiiippp(#1)|#2|}%
    {\iiixiiippp(#1)|#2|/>`>`>`>`>`>`>`>`>`>`>`>/}}%
\def\iiixiiippp(#1)|#2|/#3/{%
    \ifnextchar<{\iiixiiipppp(#1)|#2|/#3/}%
    {\iiixiiipppp(#1)|#2|/#3/<\default,\default>}}%
\def\iiixiiipppp(#1)|#2|/#3/<#4>{\ifnextchar[{\iiixiiippppp(#1)|#2|/#3/%
   <#4>0<0,0>}{\iiixiiippppp(#1)|#2|/#3/<#4>}}%
\def\iiixiiippppp(#1)|#2|/#3/<#4>#5{\ifnextchar<%
   {\iiixiiipppppp(#1)|#2|/#3/<#4>{#5}}%
   {\iiixiiipppppp(#1)|#2|/#3/<#4>{#5}<400,400>}}%
\def\iiixiipppppp(#1,#2)|#3|/#4/<#5>#6<#7>[#8;#9]{%
 \xpos#1\ypos#2\relax%
 \def\next|##1##2##3##4##5##6##7|{\def\xa{##1}\def\xb{##2}%
 \def\xc{##3}\def\xd{##4}\def\xe{##5}\def\xf{##6}\def\xg{##7}}%
 \next|#3|%
 \def\next<##1,##2>{\deltax##1\deltay##2}%
 \next<#5>%
 \deltaX#7
 \topw#6
 \def\next{%
 \ifodd\topw \def\za{}\else\def\za{\relax}\fi \divide\topw by 2
 \ifodd\topw \def\zb{}\else\def\zb{\relax}\fi \divide\topw by 2
 \ifodd\topw \def\zc{}\else\def\zc{\relax}\fi \divide\topw by 2
 \ifodd\topw \def\zd{}\else\def\zd{\relax}\fi}%
 \next%
 \def\next[##1`##2`##3`##4`##5`##6]{%
 \def\nodea{##1}\def\nodeb{##2}\def\nodec{##3}%
 \def\noded{##4}\def\nodee{##5}\def\nodef{##6}}%
 \next[#8]%
 \def\next[##1`##2`##3`##4`##5`##6`##7]{%
 \def\labela{##1}\def\labelb{##2}\def\labelc{##3}%
 \def\labeld{##4}\def\labele{##5}\def\labelf{##6}\def\labelg{##7}}%
 \next[#9]%
 \def\next/##1`##2`##3`##4`##5`##6`##7/{%
 {\ifx\zc\empty\advance\xpos -\deltaX
\relax\morphism(\xpos,\ypos)<\deltaX,0>[0`\noded;]\fi}%
 \morphism(\xpos,\ypos)|\xf|/##6/<\deltax,0>[\noded`\nodee;\labelf]%
 \advance\xpos by \deltax%
 \morphism(\xpos,\ypos)|\xg|/##7/<\deltax,0>[\nodee`\nodef;\labelg]%
 {\ifx\zd\empty \advance\xpos by \deltax
\relax  \morphism(\xpos,\ypos)<\deltaX,0>[\nodef`0;]\fi}%
 \advance\xpos by -\deltax  \advance\ypos by \deltay
 {\ifx\za\empty\advance \xpos by -\deltaX
\relax\morphism(\xpos,\ypos)<\deltaX,0>[0`\nodea;]\fi}%
 \morphism(\xpos,\ypos)|\xa|/##1/<\deltax,0>[\nodea`\nodeb;\labela]%
 \morphism(\xpos,\ypos)|\xc|/##3/<0,-\deltay>[\nodea`\noded;\labelc]%
 \advance\xpos by \deltax%
 \morphism(\xpos,\ypos)|\xb|/##2/<\deltax,0>[\nodeb`\nodec;\labelb]%
 \morphism(\xpos,\ypos)|\xd|/##4/<0,-\deltay>[\nodeb`\nodee;\labeld]%
 \advance\xpos by \deltax%
 \morphism(\xpos,\ypos)|\xe|/##5/<0,-\deltay>[\nodec`\nodef;\labele]%
 \ifx\zb\empty\relax \morphism(\xpos,\ypos)<\deltaX,0>[\nodec`0;]\fi}%
 \next/#4/\ignorespaces}%
\def\iiixiip(#1){\ifnextchar|{\iiixiipp(#1)}%
  {\iiixiipp(#1)|aalmrbb|}}%
\def\iiixiipp(#1)|#2|{\ifnextchar/{\iiixiippp(#1)|#2|}%
    {\iiixiippp(#1)|#2|/>`>`>`>`>`>`>/}}%
\def\iiixiippp(#1)|#2|/#3/{%
    \ifnextchar<{\iiixiipppp(#1)|#2|/#3/}%
    {\iiixiipppp(#1)|#2|/#3/<\default,\default>}}%
\def\iiixiipppp(#1)|#2|/#3/<#4>{\ifnextchar[{\iiixiippppp(#1)|#2|/#3/%
   <#4>{0}<0>}{\iiixiippppp(#1)|#2|/#3/<#4>}}%
\def\iiixiippppp(#1)|#2|/#3/<#4>#5{\ifnextchar<%
   {\iiixiipppppp(#1)|#2|/#3/<#4>{#5}}%
   {\iiixiipppppp(#1)|#2|/#3/<#4>{#5}<400>}}%
\def\node#1(#2,#3)[#4]{%
\expandafter\gdef\csname x@#1\endcsname{#2}%
\expandafter\gdef\csname y@#1\endcsname{#3}%
\expandafter\gdef\csname ob@#1\endcsname{#4}%
\ignorespaces}%
\def\arrowp|#1|{\ifnextchar/{\arrowpp|#1|}{\arrowpp|#1|/>/}}%
\def\arrowpp|#1|/#2/[#3`#4;#5]{%
\xfinish=\csname x@#4\endcsname%
\yfinish=\csname y@#4\endcsname%
\advance\xfinish by -\csname x@#3\endcsname%
\advance\yfinish by -\csname y@#3\endcsname%
\morphism(\csname x@#3\endcsname,\csname y@#3\endcsname)|#1|/{#2}/%
<\xfinish,\yfinish>[\csname ob@#3\endcsname`\csname ob@#4\endcsname;#5]%
}%
\def\Loop(#1,#2)#3(#4,#5){\POS(#1,#2)*+!!<0ex,\axis>{#3}\ar@(#4,#5)}%
\def\iloop#1(#2,#3){\xy\Loop(0,0)#1(#2,#3)\endxy}%
     \let \PATHafterPOS\PATHafterPOS@default%
     \let \arsavedPATHafterPOS@@\relax%
     \let\afterar@@\relax%
\xydef@\endxyobj{\if\inxy@\else\xyerror@{Unexpected \string\endxy}{}\fi%
>  \relax%
>   \dimen@=\Y@max \advance\dimen@-\Y@min%
>   \ifdim\dimen@<\z@ \dimen@=\z@ \Y@min=\z@ \Y@max=\z@ \fi%
>   \dimen@=\X@max \advance\dimen@-\X@min%
>   \ifdim\dimen@<\z@ \dimen@=\z@ \X@min=\z@ \X@max=\z@ \fi%
>   \edef\tmp@{\egroup%
>     \setboxz@h{\kern-\the\X@min \boxz@}%
>     \ht\z@=\the\Y@max \dp\z@=-\the\Y@min \wdz@=\the\dimen@%
>     \noexpand\maybeunraise@ \raise\dimen@\boxz@%
>     \noexpand\recoverXyStyle@ \egroup \noexpand\xy@end%
>     \U@c=\the\Y@max \advance\U@c-\the\Y@c%
>     \D@c=-\the\Y@min \advance\D@c\the\Y@c%
>     \L@c=-\the\X@min  \advance\L@c\the\X@c%
>     \R@c=\the\X@max  \advance\R@c-\the\X@c%
>    }\tmp@}%
\gdef\xymerge@MinMax{}%
\xydef@\twocell{\hbox\bgroup\xysave@MinMax\@twocell}%
\xydef@\uppertwocell{\hbox\bgroup\xysave@MinMax\@uppertwocell}%
\xydef@\lowertwocell{\hbox\bgroup\xysave@MinMax\@lowertwocell}%
\xydef@\compositemap{\hbox\bgroup\xysave@MinMax\@compositemap}%
\xydef@\xysave@MinMax{\xdef\xymerge@MinMax{%
   \noexpand\ifdim\X@max<\the\X@max \X@max=\the\X@max\noexpand\fi%
   \noexpand\ifdim\X@min>\the\X@min \X@min=\the\X@min\noexpand\fi%
   \noexpand\ifdim\Y@max<\the\Y@max \Y@max=\the\Y@max\noexpand\fi%
   \noexpand\ifdim\Y@min>\the\Y@min \Y@min=\the\Y@min\noexpand\fi%
  }}%
\xydef@\drop@Twocell{\boxz@ \xymerge@MinMax}%
\xydef@\twocell@DONE{%
  \edef\tmp@{\egroup%
   \X@min=\the\X@min \X@max=\the\X@max%
   \Y@min=\the\Y@min \Y@max=\the\Y@max}\tmp@%
  \L@c=\X@c \advance\L@c-\X@min \R@c=\X@max \advance\R@c-\X@c%
  \D@c=\Y@c \advance\D@c-\Y@min \U@c=\Y@max \advance\U@c-\Y@c%
  \ht\z@=\U@c \dp\z@=\D@c \dimen@=\L@c \advance\dimen@\R@c \wdz@=\dimen@%
  \computeLeftUpness@%
  \setboxz@h{\kern-\X@p \raise-\Y@c\boxz@ }%
  \dimen@=\L@c \advance\dimen@\R@c \wdz@=\dimen@ \ht\z@=\U@c \dp\z@=\D@c%
  \Edge@c={\rectangleEdge}\Invisible@false \Hidden@false%
  \edef\Drop@@{\noexpand\drop@Twocell%
   \noexpand\def\noexpand\Leftness@{\Leftness@}%
   \noexpand\def\noexpand\Upness@{\Upness@}}%
  \edef\Connect@@{\noexpand\connect@Twocell%
   \noexpand\ifdim\X@max<\the\X@max \X@max=\the\X@max\noexpand\fi%
   \noexpand\ifdim\X@min>\the\X@min \X@min=\the\X@min\noexpand\fi%
   \noexpand\ifdim\Y@max<\the\Y@max \Y@max=\the\Y@max\noexpand\fi%
   \noexpand\ifdim\Y@min>\the\Y@min \Y@min=\the\Y@min\noexpand\fi }%
  \xymerge@MinMax%
}%
\begin{document}
\newtheorem{theorem}{Theorem}
\newtheorem{lemma}[theorem]{Lemma}
\newtheorem{proposition}[theorem]{Proposition}
\newtheorem{corollary}[theorem]{Corollary}
\theoremstyle{definition}
\newtheorem{definition}[theorem]{Definition}

\maketitle

\begin{abstract}
A weak bialgebra is known to be a special case of a bialgebroid. In this paper
we study the relationship of this fact with the Tannaka theory of bialgebroids
as developed in \cite{P}. We obtain a Tannaka
representation theorem with respect to a separable Frobenius fiber functor.
\end{abstract}

In \cite{P} Tannaka theory was developed with respect to a fiber functor with
values in the category of two sided $R$-bimodules. The reconstructed object
in that case is a bialgebroid. We show that a separable Frobenius monoidal
functor gives rise to a fiber functor of that form. The reconstructed object
then is a bialgebroid with a separable Frobenius base monoid, which is the same
as a weak bialgebra. This gives an alternative approach to \cite{M} where the
weak bialgebra was reconstructed directly from the separable Frobenius functor.

Suppose that $\mathcal{V}$ is a braided monoidal category. Let
$\mathrm{Mod}\mathcal{V}$ denote the bicategory of modules. Objects of $\mathrm{Mod}\mathcal{V}$ are
the monoids $S$, $R$ in $\mathcal{V}$. The homcategory
$\mathrm{Mod}\mathcal{V}(S, R)$ is the category of Eilenberg-Moore objects for
a pseudomonad $S\otimes-\otimes R$. A 1-cell from $S$ to $R$, depicted $S \spanarr R$, is a
module from $S$ to $R$. A 1-cell from $I$ to $R$ is a right $R$-module.

By $k$ we denote a commutative ring and by $k\operatorname{-Mod}$ we denote the
monoidal category of $k$-modules. If $\mathcal{V} = k\operatorname{-Mod}$
then the monoidal bicategory of modules is denoted simply by $\mathrm{Mod}$.
The objects of $\mathrm{Mod}$ are $k$-algebras and the 1-cells are the modules between $k$-algebras.

\begin{definition}
(see \cite{path}) A module $M: S \spanarr R$ is called Cauchy if it has a left
adjoint $M^\ast : R \spanarr S$ in $\mathrm{Mod}\mathcal{V}$.
\end{definition}

There exists a characterization of Cauchy modules for the case $\mathcal{V} =
k\operatorname{-Mod}$.

\begin{theorem}\label{t1}  
A module $M : S \spanarr R$ between $k$-algebras is Cauchy if and only if it
is finitely generated and projective as a right $R$-module.
\end{theorem}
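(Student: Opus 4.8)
The plan is to identify the adjoint explicitly and then read off the two implications from the dual-basis description of finitely generated projective modules. Recall that an $(S,R)$-bimodule $M$ is a $1$-cell $S \spanarr R$ in $\mathrm{Mod}$, that composition of $1$-cells is tensoring over the shared algebra, and that the identity $1$-cells are the algebras regarded as bimodules over themselves. Thus an adjoint $M^\ast : R \spanarr S$ for $M$ amounts to an $(R,S)$-bimodule $M^\ast$ together with a unit and a counit, built from an evaluation $M^\ast \otimes_S M \to R$ and a coevaluation $S \to M \otimes_R M^\ast$, subject to the two triangle identities. Throughout I will use the standard fact that a right $R$-module $M_R$ is finitely generated and projective if and only if it admits a finite dual basis, i.e.\ finitely many elements $m_i \in M$ and right $R$-linear maps $\phi_i \in \operatorname{Hom}_R(M_R, R_R)$ with $m = \sum_i m_i\, \phi_i(m)$ for every $m \in M$.

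For the direction assuming $M_R$ finitely generated and projective, I would take $M^\ast = \operatorname{Hom}_R(M_R, R_R)$ equipped with the $(R,S)$-bimodule structure $(r\phi s)(m) = r\,\phi(sm)$, and check that this is well defined and bilinear. The evaluation is the pairing $\phi \otimes m \mapsto \phi(m)$, which is $S$-balanced and an $(R,R)$-bimodule map; the coevaluation sends the unit to the dual-basis element $\sum_i m_i \otimes \phi_i$, and one verifies that it is independent of the chosen dual basis and is an $(S,S)$-bimodule map. The two triangle identities then reduce, after unwinding the tensor products, precisely to the dual-basis equation $\sum_i m_i\, \phi_i(m) = m$ and its dual. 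Hence $M$ acquires an adjoint and is Cauchy.

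For the converse, suppose $M$ is Cauchy with adjoint $M^\ast$ and prescribed unit and counit. The coevaluation is a map into the tensor product $M \otimes_R M^\ast$, so the image of the unit is necessarily a finite sum $\sum_{i=1}^n m_i \otimes \xi_i$; this finiteness is exactly what will force finite generation. Using the counit I would define $\phi_i \in \operatorname{Hom}_R(M_R, R_R)$ by pairing $\xi_i$ against a variable element of $M$, so that $\phi_i(m) \in R$, noting that right $R$-linearity of $\phi_i$ follows from the counit being an $(R,R)$-bimodule map. The relevant triangle identity, evaluated at an arbitrary $m \in M$, then yields $m = \sum_i m_i\, \phi_i(m)$, exhibiting $(m_i, \phi_i)$ as a finite dual basis and so proving $M_R$ finitely generated and projective.

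The computations in the first two paragraphs are routine once the bimodule structures are fixed; the genuine content, and the step I would treat most carefully, is the converse, where one must confirm that the maps produced from the counit really are right $R$-linear and that the triangle identity delivers the dual-basis equation on the nose. The conceptual heart is the observation that the coevaluation, landing in a finite tensor, simultaneously supplies the finite generating set $\{m_i\}$ and, through the triangle identity, the projectivity witnessed by $\{\phi_i\}$.
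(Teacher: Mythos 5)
The paper does not actually prove this theorem: it is stated as a known characterization and implicitly attributed to the literature (Street's \emph{Quantum Groups: a path to current algebra}, the reference cited for the definition of Cauchy module), so there is no in-paper argument to compare yours against. Your proposal is the standard and correct proof via the dual basis lemma, and the unit/counit conventions you adopt ($S \to M\otimes_R M^\ast$ and $M^\ast\otimes_S M \to R$) match the ones the paper itself uses later in Proposition~4. The outline is sound in both directions: the forward direction needs only the finiteness of any tensor representative of the coevaluation's value together with one triangle identity, and the converse reduces to the dual-basis equation and its formal consequence $\sum_i \phi(m_i)\phi_i = \phi$. The one verification you defer that genuinely requires an argument (rather than being a definition-chase) is that the coevaluation $s \mapsto \sum_i s m_i \otimes \phi_i$ is also \emph{right} $S$-linear, i.e.\ that $\sum_i s m_i \otimes \phi_i = \sum_i m_i \otimes \phi_i s$ in $M \otimes_R M^\ast$; this follows by expanding $s m_i$ through the dual basis, but it should be recorded explicitly since it is exactly the point where the dual basis property is used a second time. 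With that check written out, the proof is complete.
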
  

Suppose that $R$ is a $k$-algebra. Let $\mathcal{C}$ be a monoidal category and
$F: \mathcal{C} \to \mathrm{Mod}(R, R)$ be a strong monoidal functor with image
in the Cauchy modules. Let $F_r : \mathcal{C} \to \mathrm{Mod}(k, R)$ be the
functor obtained from $F$ by forgetting the left action. Let $(-)^\ast :
\mathrm{Mod}(k, R) \to \mathrm{Mod}(R, k)$ be the functor taking the left
adjoint. Consider the two variable functor

\begin{equation}\label{coend}
\mathcal{C}^o\times \mathcal{C} \to^{F_l\times F_l}
\mathrm{Mod}(k, R)\times\mathrm{Mod}(k, R) \to^{(-)^\ast\times 1}
\mathrm{Mod}(R, k)\times
\mathrm{Mod}(k, R) \to^{\otimes_k} \mathrm{Mod}(R, R)\text{.}
\end{equation}

\noindent Let $L$ denote the coend of this. It
was shown in \cite{P} that $L$ is a bialgebroid with base $R$. Moreover, for
$k$ a field, the following representation theorem was obtained.

\begin{theorem}\label{theorem} (Ph\`ung H\^o Hai \cite{P})
Let $\mathcal{C}$ be a locally finite $k$-linear abelian monoidal category
and let $F : \mathcal{C} \to \mathrm{Mod}(R, R)$ be a
faithful, exact, strong monoidal functor with image in the Cauchy modules. Then
there is a monoidal equivalence between $\mathcal{C}$ and the monoidal category
of comodules over $L$.
\end{theorem}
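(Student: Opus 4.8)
The plan is to build a comparison functor $K\colon\mathcal{C}\to\mathrm{Comod}(L)$ lifting $F$ through the forgetful functor $U\colon\mathrm{Comod}(L)\to\mathrm{Mod}(R,R)$, and then to prove that $K$ is a strong monoidal equivalence. The existence of $K$ is formal once the coend is in place: writing $\xi_c$ for the universal dinatural family exhibiting $L$ as the coend \eqref{coend}, each $Fc$ acquires a canonical coaction $\rho_c\colon Fc\to Fc\otimes_R L$ assembled from $\xi_c$ together with the evaluation and coevaluation of the Cauchy dual $(Fc)^\ast$. The dual exists precisely because, by Theorem~\ref{t1}, every object in the image of $F$ is finitely generated and projective as a right $R$-module, hence dualizable in $\mathrm{Mod}$. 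Coassociativity and counitality of $\rho_c$ are the defining coend identities, and naturality of $\xi$ makes $\rho$ natural in $c$, so $Kc=(Fc,\rho_c)$ is a functor with $UK=F$.

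Next I would verify that $K$ is strong monoidal. Because $L$ is a bialgebroid with base $R$ (as shown in \cite{P}), the category $\mathrm{Comod}(L)$ is monoidal with tensor computed over $R$, and its comultiplication is by construction the one induced from the monoidal constraints of the strong monoidal $F$. The coherence isomorphisms $Fc\otimes_R Fd\cong F(c\otimes d)$ and $R\cong FI$ must then be checked to be $L$-colinear; this is a bookkeeping verification, since the comultiplication of $L$ was defined from exactly these data, so the comodule condition holds essentially by definition.

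For fully faithfulness, a morphism $Kc\to Kd$ is an $R$-bimodule map $f\colon Fc\to Fd$ with $\rho_d f=(f\otimes 1)\rho_c$. Faithfulness of $F$ gives injectivity of $\mathcal{C}(c,d)\to\mathrm{Comod}(L)(Kc,Kd)$ for free, so the content is surjectivity: the colinearity constraint must cut the bimodule maps down to exactly the image of $F$ on hom-sets, \emph{even though $F$ itself need not be full}. Here I would exploit the Cauchy duality supplied by Theorem~\ref{t1}: transposing along $(-)^\ast$ converts $L$-colinearity of $f$ into dinaturality against the universal family $\xi$, and the terminal (universal) property of the coend $L$ then identifies such dinatural data with a morphism of $\mathcal{C}$. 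Thus $K$ is fully faithful, and its essential image is a full subcategory of $\mathrm{Comod}(L)$.

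The hard part will be essential surjectivity. The strategy is to use that $k$ is a field to invoke the fundamental theorem of comodules: every $L$-comodule is the filtered union of its finitely generated subcomodules. The coend presentation \eqref{coend} exhibits $L$ as a filtered colimit over $\mathcal{C}$, so the matrix coefficients of any such finitely generated subcomodule $V$ factor through a single $\xi_c$; this realizes $V$ as a subcomodule of a finite copower of $Kc$. It then remains to show the essential image of $K$ is closed under subobjects and quotients, which is where the abelianness of $\mathcal{C}$, its local finiteness, and the exactness of $F$ enter: a subcomodule of $Kc$ is the kernel of a morphism out of $Kc$ in the abelian category $\mathrm{Comod}(L)$, which by fully faithfulness is $K$ of a kernel in $\mathcal{C}$, computed correctly because $F$ is exact. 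Iterating over the filtered union recovers every comodule in the image. The delicate point to get right is reconciling the $k$-linear finiteness hypotheses on $\mathcal{C}$ with the $R$-relative finiteness of the Cauchy objects, i.e. ensuring the coefficient subcoalgebras carved out of $L$ are small enough for the density argument to close; this is precisely where the field hypothesis on $k$ is indispensable.
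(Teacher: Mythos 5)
First, a point of comparison: the paper does not prove this theorem at all. It is imported from Ph\`ung H\^o Hai \cite{P} (``the following representation theorem was obtained''), stated so that the author can later apply it to the separable Frobenius situation; there is no in-paper argument to measure yours against. On its own terms, your outline is the standard Tannaka-reconstruction template --- lift $F$ through the forgetful functor to a comparison functor $K$ into $L$-comodules via the universal coaction built from $\xi_c$ and the Cauchy duality of Theorem~\ref{t1}, then check monoidality, full faithfulness, and essential surjectivity --- and that is indeed the shape of the argument in \cite{P} and in the Joyal--Street treatment \cite{JS}. The construction of $K$ and the density/filtered-colimit strategy for essential surjectivity are reasonable.

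The genuine gap is in your full-faithfulness step. The universal property of the coend \eqref{coend} classifies morphisms \emph{out of} $L$: a dinatural family $F(c)^\ast\otimes_k F(c)\to M$ corresponds to a map $L\to M$. It does not classify which bimodule maps $f\colon Fc\to Fd$ are $L$-colinear, and ``transposing colinearity into dinaturality'' only rewrites the constraint on $f$; no universal property then converts such an $f$ back into an arrow $c\to d$ of $\mathcal{C}$. Surjectivity of $\mathcal{C}(c,d)\to\mathrm{Comod}(L)(Kc,Kd)$ is in fact the analytic core of the theorem, and it is exactly where faithfulness, exactness, abelianness and local finiteness must be spent (via the coefficient-coalgebra / double-centralizer argument on the subcategory generated by $c$ and $d$, using finite length of objects), not where they can be deferred to the essential-surjectivity stage as you do. A secondary soft spot: in the bialgebroid setting the tensor product of $L$-comodules is a Takeuchi-type product over $R$, so verifying that the monoidal constraints of $F$ are colinear is a real computation with the two $R$-actions on $L$, not something that ``holds essentially by definition.''
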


A $k$-linear abelian category is locally finite if each of its hom vector spaces
is finite dimensional and each object has the composition series of finite
length. For description of the monoidal category of comodules over a
bialgebroid the reader may refer to \cite{P}.

Suppose that $F : \mathcal{W} \to \mathcal{V}$ is a monoidal functor with
structure maps

\begin{equation}\label{phi}
\phi_{X, Y} : F(X)\otimes F(Y) \to F(X\otimes Y), \qquad
\phi_0 : I \to F(I)\text{.}
\end{equation}

\noindent $R = F(I)$ becomes a monoid in $\mathcal{V}$ with multiplication and
unit:

$$F(I)\otimes F(I) \to^{\phi_{I, I}} F(I\otimes I) \cong F(I)\text{,} \qquad I
\to^{\phi_0} F(I)\text{.}$$

\noindent The functor $F$ can be factored through the forgetful functor
$U : \mathrm{Mod}\mathcal{V}(R, R) \to \mathcal{V}$ as shown in the diagram (see
\cite{S}):
 
$$ 
\bfig
\Dtriangle[\mathcal{C}`\mathrm{Mod}\mathcal{V}(R, R)`\mathcal{V};F`\tilde{F}`U]
\efig
$$

\noindent $\tilde{F}$ takes an object
$X$ to $F(X)$, which is a module $R \spanarr R$ with left and right actions 

$$F(I)\otimes F(X) \to^{\phi_{I, X}} F(I\otimes X) \cong F(X)$$
$$F(X)\otimes F(I) \to^{\phi_{X, I}} F(X\otimes I) \cong F(X) \text{.}$$

\noindent Commutativity of the
diagram

\begin{equation}\label{XY}
F(X)\otimes F(I)\otimes F(Y) \two^{\phi_{X,
I}\otimes1}_{1\otimes\phi_{I, X}} F(X)\otimes F(Y) \to^{\phi_{X, Y}} F(X\otimes Y)
\end{equation}
 
\noindent induces a map $\pi_{X, Y} : F(X)\otimes_R F(Y) \to F(X\otimes Y)$.
The maps $\pi_{X, Y}$ make $\tilde{F}$ into a normal (meaning that
$\tilde{F}(I) = I$) monoidal functor.

A functor $F$ between monoidal categories is called
Frobenius monoidal \cite{S1}, \cite{DP} if it is equipped with a monoidal
structure \eqref{phi} and an opmonoidal structure
 
$$\psi_{X, Y} : F(X\otimes Y) \to F(X)\otimes F(Y), \qquad \psi_0 : F(I) \to
I\text{,}$$
 
\noindent which satisfy 

$$
\bfig
\square<1100,500>[F(X\otimes Y)\otimes F(Z)`F(X)\otimes F(Y)\otimes
F(Z)`F(X\otimes Y\otimes Z)`F(X)\otimes F(Y\otimes Z);\psi\otimes
1`\phi`1\otimes\phi`\psi] 
\square(0,-850)<1100,500>[F(X)\otimes F(Y\otimes Z)`F(X)\otimes F(Y)\otimes
F(Z)`F(X\otimes Y\otimes Z)`F(X\otimes Y)\otimes F(Z);1\otimes
\psi`phi`\phi\otimes 1`\psi]
\efig
$$

\noindent A Frobenius monoidal functor is called separable if:

$$\bfig
\Atriangle[F(X\otimes Y)`F(X)\otimes F(Y)`F(X\otimes Y);\psi`\phi`1]
\efig$$
 
\begin{proposition}
(see \cite{S}) If $F$ is a separable Frobenius monoidal functor then
$\tilde{F}$ is a strong monoidal functor.
\end{proposition}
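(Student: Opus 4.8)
Since $\tilde F$ is already normal monoidal and $\tilde F(I)=R$ is the unit of $\mathrm{Mod}\mathcal V(R,R)$, proving it strong amounts to showing that each comparison $\pi_{X,Y}\colon F(X)\otimes_R F(Y)\to F(X\otimes Y)$ is invertible in $\mathrm{Mod}\mathcal V(R,R)$. The plan is to reduce this to $\mathcal V$: the forgetful functor $U$ reflects isomorphisms, so it will be enough to produce a two-sided inverse of $\pi_{X,Y}$ in $\mathcal V$. Writing $q_{X,Y}\colon F(X)\otimes F(Y)\to F(X)\otimes_R F(Y)$ for the coequalizer of the right action $\phi_{X,I}\otimes 1$ and the left action $1\otimes\phi_{I,Y}$ (so that $\pi_{X,Y}\circ q_{X,Y}=\phi_{X,Y}$ and $q_{X,Y}$ is epi, this being exactly the content of diagram \eqref{XY}), I would take $\bar\psi_{X,Y}:=q_{X,Y}\circ\psi_{X,Y}$ as the candidate inverse.

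One inclusion is immediate and is where separability enters in its raw form: I would compute $\pi_{X,Y}\circ\bar\psi_{X,Y}=\pi_{X,Y}\circ q_{X,Y}\circ\psi_{X,Y}=\phi_{X,Y}\circ\psi_{X,Y}=1_{F(X\otimes Y)}$, the last equality being the separability condition $\phi\circ\psi=1$.

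For the other direction, because $q_{X,Y}$ is epi the identity $\bar\psi_{X,Y}\circ\pi_{X,Y}=1$ is equivalent to
\[ q_{X,Y}\circ\psi_{X,Y}\circ\phi_{X,Y}=q_{X,Y}. \]
Here I would first pin down the idempotent $\psi_{X,Y}\circ\phi_{X,Y}$. Specializing the lower Frobenius square to its middle object $I$ and absorbing the unit constraints $X\otimes I\cong X$, $I\otimes Y\cong Y$, the relation $\psi_{X\otimes Y,Z}\circ\phi_{X,Y\otimes Z}=(\phi_{X,Y}\otimes1)\circ(1\otimes\psi_{Y,Z})$ collapses to
\[ \psi_{X,Y}\circ\phi_{X,Y}=(\phi_{X,I}\otimes 1)\circ(1\otimes\psi_{I,Y}). \]
The right-hand side is exactly $(r_X\otimes1)\circ(1\otimes\delta_Y)$, where $r_X=\phi_{X,I}$ is the right $R$-action on $F(X)$ and $\delta_Y=\psi_{I,Y}\colon F(Y)\to R\otimes F(Y)$. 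I would then postcompose with $q_{X,Y}$ and apply the coequalizer relation $q_{X,Y}\circ(r_X\otimes1)=q_{X,Y}\circ(1\otimes\ell_Y)$, with $\ell_Y=\phi_{I,Y}$ the left action, obtaining $q_{X,Y}\circ\psi_{X,Y}\circ\phi_{X,Y}=q_{X,Y}\circ(1\otimes\ell_Y\circ\delta_Y)$. Finally $\ell_Y\circ\delta_Y=\phi_{I,Y}\circ\psi_{I,Y}=1_{F(Y)}$ by separability at $X=I$, which yields the required equality.

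The one genuine obstacle is the middle step: seeing that the Frobenius axiom forces $\psi_{X,Y}\circ\phi_{X,Y}$ to be precisely the $R$-balancing composite $(r_X\otimes1)(1\otimes\delta_Y)$ that $q_{X,Y}$ is built to absorb; once this is in hand the rest is formal. The coherence isomorphisms suppressed above are harmless and can be removed outright by replacing $\mathcal C$ and $\mathcal V$ with strict monoidal categories. Having exhibited $\bar\psi_{X,Y}$ as a two-sided inverse in $\mathcal V$, I conclude that $\pi_{X,Y}$ is an isomorphism of $R$-bimodules, so $\tilde F$ is strong monoidal.
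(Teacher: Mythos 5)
Your argument is correct and is essentially the paper's proof unpacked: the paper simply observes that the fork \eqref{XY} is split by $\psi_{X,Y}$ and $\psi_{X,I}\otimes 1$ (separability giving $\phi\circ\psi=1$ and the Frobenius axiom specialized at the unit giving $\psi\circ\phi$ equal to the balancing composite), hence an absolute coequalizer, so $\pi_{X,Y}$ is invertible. Your explicit two-sided inverse $q_{X,Y}\circ\psi_{X,Y}$, together with the two identities you verify, is exactly what that split-coequalizer observation amounts to.
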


\begin{proof}
The diagram \eqref{XY} is a split coequalizer by the morphisms 

$$F(X\otimes Y) \to^{\psi_{X, Y}} F(X)\otimes F(Y) \qquad F(X)\otimes F(Y)
\to^{\psi_{X, I}\otimes 1} F(X)\otimes F(I)\otimes F(Y)$$

\noindent It follows that the map $\pi_{X, Y}$ defining the monoidal structure
on $\tilde{F}$ is an isomorphism.
\end{proof}

A separable Frobenius monoid in $\mathcal{V}$ is an object $R$
equipped with a monoid structure and a comonoid structure which satisfy the
Frobenius condition:

\begin{center}
\includegraphics[scale=0.3]{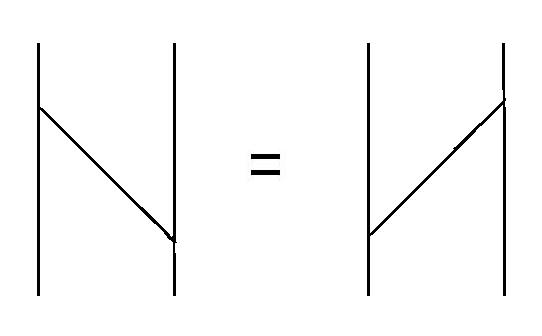}
\end{center}

\noindent and the separability condition:

\begin{center}
\includegraphics[scale=0.3]{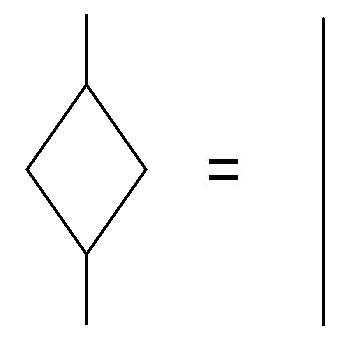}
\end{center}

If $F : \mathcal{W} \to \mathcal{V}$ is a separable Frobenius monoidal functor
then $R = F(I)$ is a separable Frobenius monoid in $\mathcal{V}$ with the monoid
structure coming from the monoidal structure of $F$ and the comonoid structure coming from the opmonoidal
structure on $F$.

\begin{proposition}\label{p}
Suppose that $R$ is a separable Frobenius monoid. A module $M : I \spanarr R$ is
Cauchy if the object $M$ has a left dual in $\mathcal{V}$.
\end{proposition}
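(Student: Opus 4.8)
The plan is to exhibit the right $R$-module $M$ as a retract, inside the category of right $R$-modules, of the free right $R$-module on its underlying object, and then to combine two facts: that this free module is Cauchy, and that Cauchy $1$-cells are closed under retracts. Write $\alpha\colon M\otimes R\rightarrow M$ for the action and regard $M\otimes R$, with action $1_M\otimes\nabla$, as the free right $R$-module on the underlying object of $M$. Being the counit of the free--forgetful adjunction, $\alpha$ is a morphism of right $R$-modules and is a split epimorphism in $\mathcal V$; the crux is to choose a splitting that is again $R$-linear, and it is here that separability is used.

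To see that $M\otimes R$ is Cauchy I would write it as a composite of two Cauchy $1$-cells, namely $I\spanarr I\spanarr R$, whose first factor is the underlying object of $M$ viewed as a $1$-cell in $\mathrm{Mod}\mathcal V(I,I)=\mathcal V$ and whose second factor is the regular right module $R\colon I\spanarr R$. The first factor is Cauchy because, by hypothesis, the underlying object of $M$ has a left dual $M^\vee$ in $\mathcal V$, with $\mathrm{ev}\colon M^\vee\otimes M\rightarrow I$ and $\mathrm{coev}\colon I\rightarrow M\otimes M^\vee$; a left dual in $\mathcal V$ is exactly a left adjoint in $\mathrm{Mod}\mathcal V(I,I)$. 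The second factor is Cauchy precisely because $R$ is Frobenius: its left adjoint is the regular left module $R\colon R\spanarr I$, the unit $1_R\Rightarrow R\otimes R$ being the comultiplication $\Delta$ and the counit $R\otimes_R R\cong R\Rightarrow I$ being the counit $\varepsilon$, and the two triangle identities are then exactly the Frobenius relations. Since adjoints compose, the composite $M\otimes R$ is Cauchy; for $\mathcal V=k\text{-Mod}$ this recovers, via Theorem~\ref{t1}, the statement that the free module on a finitely generated projective $k$-module is finitely generated projective over $R$.

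For the retraction, set $\beta=\Delta\circ u\colon I\rightarrow R\otimes R$, the separability element, and define $s=(\alpha\otimes 1_R)\circ(1_M\otimes\beta)\colon M\rightarrow M\otimes R$. That $\alpha\circ s=1_M$ follows from associativity of the action together with the separability condition $\nabla\circ\Delta=1_R$ and the unit law. That $s$ is right $R$-linear amounts to the centrality identity $(\nabla\otimes 1_R)\circ(1_R\otimes\beta)=\Delta=(1_R\otimes\nabla)\circ(\beta\otimes 1_R)$, which is what the two Frobenius relations yield after composing with the unit. Hence $p=s\circ\alpha$ is an idempotent endomorphism of the right $R$-module $M\otimes R$ with splitting $M$. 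Transporting $p$ along the adjunction of the previous paragraph gives an idempotent on its left adjoint, and splitting that idempotent produces the desired left adjoint $M^\ast\colon R\spanarr I$ of $M$; the splitting exists because the homcategories of $\mathrm{Mod}\mathcal V$ are Eilenberg--Moore categories and hence idempotent complete.

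I expect the main obstacle to be exactly the $R$-linearity of the splitting $s$, that is, the centrality of the separability element $\beta$, since this is the single place where both the Frobenius relations and the separability condition are needed together; once the retraction is known to live in the category of right $R$-modules, the Cauchy-ness of the free module and its inheritance by the retract are comparatively formal.
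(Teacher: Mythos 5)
Your argument is correct in substance but takes a genuinely different route from the paper's. The paper constructs the adjoint explicitly: it takes the left dual $M^\ast$ of $M$ in $\mathcal{V}$, equips it with a left $R$-action built from the dual of the action and the Frobenius structure, writes down the unit $n\colon I\rightarrow M\otimes_R M^\ast$ and counit $e\colon M^\ast\otimes M\rightarrow R$ directly, and then verifies by string-diagram calculation that $e$ is a bimodule map and that the snake identities hold, the separable Frobenius structure entering in those calculations. You instead argue softly: $M$ is a retract, in $\mathrm{Mod}\mathcal{V}(I,R)$, of the free module $M\otimes R$ via the separability idempotent $\beta=\Delta\circ u$; the free module is the composite of the dualizable $1$-cell $I\spanarr I$ with $R\colon I\spanarr R$ and hence Cauchy; and Cauchy $1$-cells are closed under retracts since mates of idempotents are idempotents and the homcategories split idempotents (which they do here because $\mathcal{V}$ must have the coequalizers needed to form $\mathrm{Mod}\mathcal{V}$ in the first place). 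Your two key computations --- $\alpha\circ s=1_M$ from $\nabla\circ\Delta=1_R$, and the centrality $(\nabla\otimes 1)(1\otimes\beta)=\Delta=(1\otimes\nabla)(\beta\otimes 1)$ from the Frobenius relations --- are exactly right, and they isolate cleanly where each hypothesis is used. What your route loses is the explicit identification of the adjoint: the paper needs to know later that $(-)^\ast$ is literally ``take the dual object in $k\operatorname{-Mod}$'' in order to write $L$ as the usual Tannakian coend, whereas your $M^\ast$ emerges only as an unidentified retract of $R\otimes M^\vee$. What it gains is that the diagrammatic verifications are replaced by standard closure properties of Cauchy objects.

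One correction to an intermediate claim. The regular module $R\colon I\spanarr R$ is Cauchy for \emph{any} monoid $R$, with adjoint the regular left module $R\colon R\spanarr I$, unit $u\colon I\rightarrow R\otimes_R R\cong R$ and counit $\nabla\colon R\otimes R\rightarrow R$; the triangle identities are just the unit laws. The data you propose ($\Delta$ as a $2$-cell $1_R\Rightarrow R\otimes R$ and $\varepsilon$ as $R\otimes_R R\Rightarrow 1_I$) sets up the adjunction in the opposite sense, and even there the triangle identities are the counit laws of the comonoid rather than the Frobenius relations --- the Frobenius relations are what make $\Delta$ a bimodule map. The slip is harmless, since the fact you need is true and requires less than you supply, but as written that step attributes to the Frobenius structure something that holds for free.
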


\begin{proof}
Let $M^\ast$ be the left dual to $M$ in $\mathcal{V}$ with duality unit and
counit

\begin{center}
\includegraphics[scale=0.4]{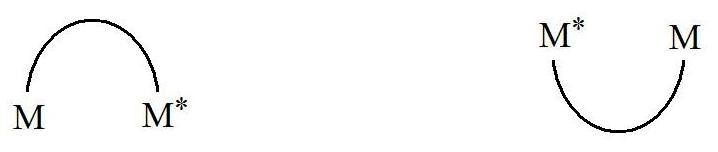}
\end{center}

\noindent $M^\ast$ becomes a module $M \spanarr I$ with action

\begin{center}
\includegraphics[scale=0.4]{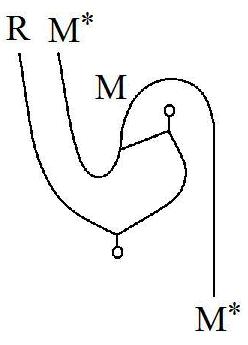}
\end{center}

\noindent Define a map $n : I \to M\otimes_R M^\ast$ as the composite of the
duality unit $I \to M\otimes M^\ast$ with the canonical injection $M\otimes
M^\ast \to M\otimes_R M^\ast$. Define a map $e : M^\ast\otimes M \to R$ by

\begin{center} 
\includegraphics[scale=0.4]{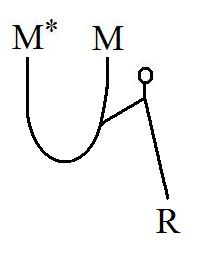}
\end{center}

\noindent The map $e$ is a map in $\mathrm{Mod}\mathcal{V}(R, R)$. The
following calculations show that it respects left and right $R$-actions.

\begin{center}
\includegraphics[scale=0.4]{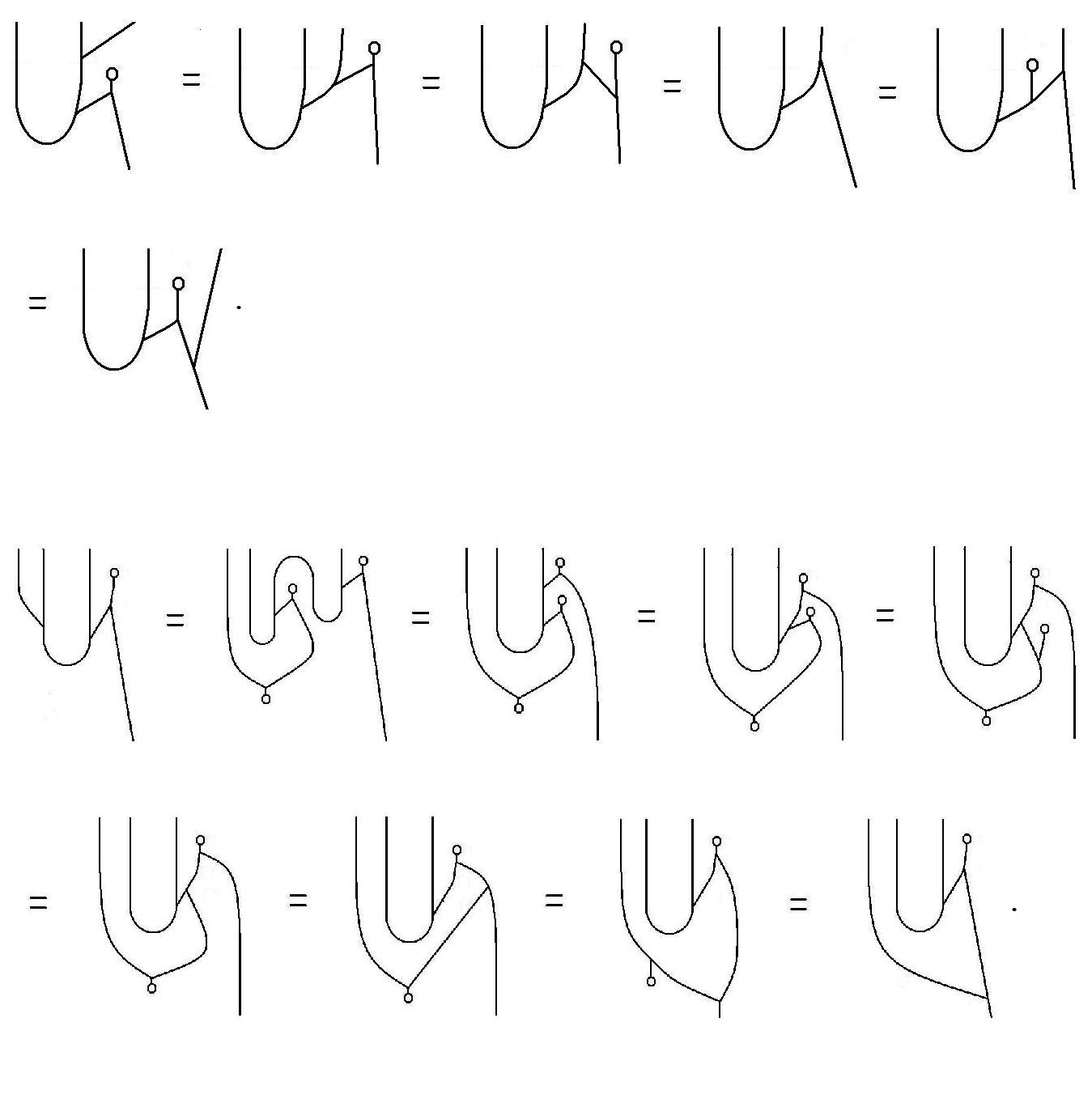}
\end{center}

$M^\ast : R \spanarr I$ is a left adjoint to $M : I \spanarr R$ in
$\mathrm{Mod}\mathcal{V}$ with unit $n$ and unit $e$. The triangle (or ``snake'')
identities are established by the following calculations.

\begin{center}
\includegraphics[scale=0.4]{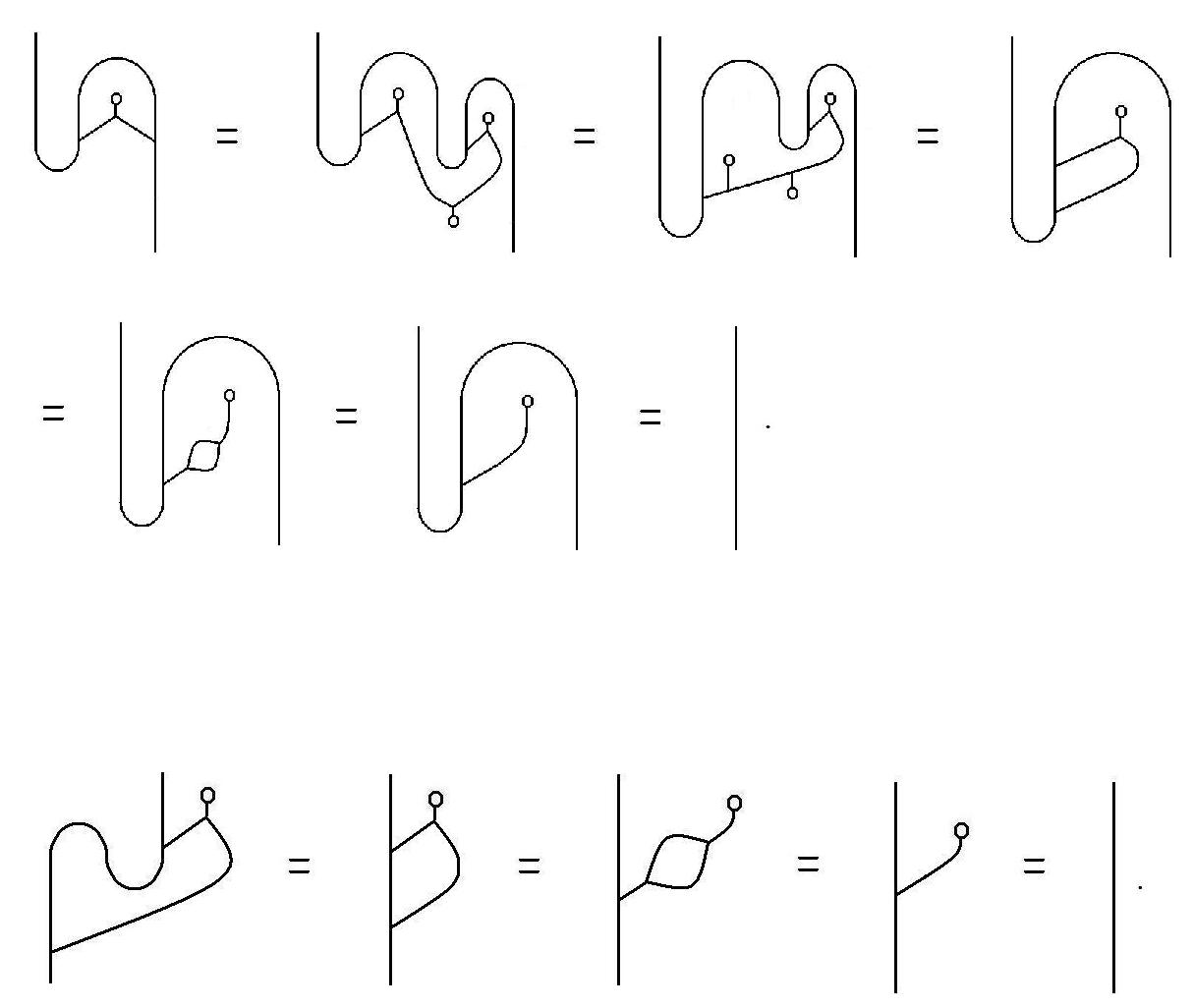}
\end{center}
 
\end{proof}

Combining Theorem \ref{t1} and Proposition \ref{p} we get

\begin{proposition}
A right module $M$ over a separable Frobenius $k$-algebra $R$ is finitely
generated and projective as an $R$-module if it is finitely generated as a
$k$-module.
\end{proposition}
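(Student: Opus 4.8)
The plan is to read the statement through the dictionary between right $R$-modules and 1-cells $I \spanarr R$ in $\mathrm{Mod}$, and then simply chain Proposition \ref{p} with Theorem \ref{t1}. First I would recall that, since $R$ is a separable Frobenius $k$-algebra, it is in particular a separable Frobenius monoid in $\mathcal{V} = k\operatorname{-Mod}$, so Proposition \ref{p} is available. A right $R$-module $M$ is precisely a 1-cell $M : I \spanarr R$, where $I = k$; with this identification both cited results apply to $M$ directly.

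The key step is to produce a left dual of $M$ in $\mathcal{V}$ out of the finiteness hypothesis. In $k\operatorname{-Mod}$ an object admits a left dual exactly when it is finitely generated and projective over $k$; over the field $k$ this coincides with being finite-dimensional, and every finitely generated $k$-module is finite-dimensional. Thus the hypothesis that $M$ is finitely generated as a $k$-module is exactly what is needed to equip $M$ with a left dual $M^\ast$ in $\mathcal{V}$, together with the usual evaluation and coevaluation.

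With the left dual in hand, Proposition \ref{p} tells us that $M : I \spanarr R$ is Cauchy, i.e.\ that it has a left adjoint in $\mathrm{Mod}$. Applying Theorem \ref{t1} with $S = I = k$ (a module $k \spanarr R$ being the same thing as a right $R$-module), Cauchyness of $M$ is equivalent to $M$ being finitely generated and projective as a right $R$-module, which is the desired conclusion.

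I expect the only genuinely delicate point to be the dualizability step: one must use that over the ground field finite generation already forces finite-dimensionality, hence projectivity over $k$, hence the existence of a left dual in $k\operatorname{-Mod}$. Over a general commutative ring this implication fails, so the field hypothesis standing in the Tannaka setting of Theorem \ref{theorem} (or, equivalently, assuming $M$ finitely generated \emph{and} projective over $k$) is what makes the argument go through; everything else is a formal transport through Proposition \ref{p} and Theorem \ref{t1}.
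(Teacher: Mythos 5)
Your proof is exactly the paper's: the paper derives this proposition by simply ``combining Theorem \ref{t1} and Proposition \ref{p}'', i.e.\ finite generation gives a left dual of $M$ in $k\operatorname{-Mod}$, Proposition \ref{p} then makes $M : I \spanarr R$ Cauchy, and Theorem \ref{t1} translates Cauchy into finitely generated projective over $R$. Your added caveat that the dualizability step really needs $M$ to be finitely generated \emph{projective} over $k$ (automatic when $k$ is a field, as in the paper's intended application) is a fair and worthwhile observation that the paper passes over in silence.
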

 
Now suppose that $F : \mathcal{C} \to k\operatorname{-Mod}$ is a separable
Frobenius monoidal functor from a monoidal category $\mathcal{C}$ to the
category of $k$-modules with image in the finitely generated projective
$k$-modules. We get a separable Frobenius algebra $R = F(I)$ and
a strong monoidal functor $\tilde{F} : \mathcal{C} \to \mathrm{Mod}(R, R)$.
By Proposition \ref{p}, $\tilde{F}$ has its image in the subcategory of
Cauchy modules. The functor $(-)^\ast$ takes the dual object in
$k\operatorname{-Mod}$. Since $R$ is a separable Frobenius algebra we have an
equivalence

$$\mathrm{Mod}(R, R) \simeq \mathrm{Comod}(R, R)\text{.}$$

\noindent It follows that the forgetful functor $U$ preserves colimits. Hence
the coend $L$ of \eqref{coend} is computed as in $k\operatorname{-Mod}$.  It can
be seen that $L$ is defined by the usual coend formula of the
Tannaka construction (see \cite{JS}) applied to the functor $F : \mathcal{C} \to
k\operatorname{-Mod}$:

$$L = \int^{M}F(M)^\ast\otimes_kF(M)\text{.}$$

\noindent A bialgebroid with a separable Frobenius base
algebra is the same as a weak bialgebra \cite{Sch}. It follows that $L$ is a
weak bialgebra. 

As a formal consequence of Theorem \ref{theorem} we obtain:

\begin{theorem}
Let $k$ be a field. Suppose that $\mathcal{C}$ is a locally finite $k$-linear
abelian monoidal category and $F : \mathcal{C} \to k\operatorname{-Vect}$ is a
faithful, exact, separable Frobenius monoidal functor with image in the finite 
dimensional $k$-vector spaces. Then there exists a weak bialgebra $L$ and a
monoidal equivalence between $\mathcal{C}$ and the monoidal category of comodules over $L$.
\end{theorem}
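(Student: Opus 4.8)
The plan is to deduce the statement from Theorem~\ref{theorem} by replacing $F$ with the lifted functor $\tilde{F} : \mathcal{C} \to \mathrm{Mod}(R, R)$ and checking that $\tilde{F}$ satisfies the hypotheses of that theorem. First I would record that, since $F$ is separable Frobenius monoidal, $R = F(I)$ is a separable Frobenius $k$-algebra, and by the Proposition of \cite{S} quoted above the lift $\tilde{F}$ is \emph{strong} monoidal. The remaining work is therefore to verify that $\tilde{F}$ is faithful, exact, and takes values in the Cauchy modules.

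For the Cauchy condition, each $F(X)$ is finite dimensional over the field $k$, hence finitely generated as a $k$-module; by the Proposition combining Theorem~\ref{t1} and Proposition~\ref{p}, $\tilde{F}(X)$ is then finitely generated and projective as a right $R$-module, and so Cauchy by Theorem~\ref{t1} (the right-module condition is insensitive to the left action, so the bimodule $\tilde{F}(X)$ poses no extra difficulty). For faithfulness and exactness I would use the factorization $F = U\tilde{F}$ through the forgetful functor $U : \mathrm{Mod}(R, R) \to k\operatorname{-Vect}$. Faithfulness of $\tilde{F}$ is immediate, since $\tilde{F}(a) = \tilde{F}(b)$ yields $F(a) = F(b)$ and $F$ is faithful. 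Exactness follows because $U$ is exact and faithful, hence reflects exact sequences: a candidate short exact sequence in $\mathcal{C}$ becomes, after applying $\tilde{F}$ and then $U$, its $F$-image, which is exact, so the $\tilde{F}$-image was exact to begin with.

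With these verifications secured, Theorem~\ref{theorem} applies to $\tilde{F}$ and produces a monoidal equivalence between $\mathcal{C}$ and the category of comodules over the coend $L = \int^{M} F(M)^{\ast}\otimes_{k} F(M)$. As noted above, the separable Frobenius equivalence $\mathrm{Mod}(R, R)\simeq\mathrm{Comod}(R, R)$ makes $U$ preserve colimits, so this coend is computed as in $k\operatorname{-Vect}$ and $L$ is a bialgebroid with base $R$. Finally, since $R$ is a separable Frobenius algebra, a bialgebroid with base $R$ is exactly a weak bialgebra by \cite{Sch}; hence $L$ is a weak bialgebra and the equivalence is the asserted one. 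I expect the main obstacle to be the two verifications about $\tilde{F}$—that it reflects exactness through $U$ and that its image lands in the Cauchy modules—since everything downstream is a formal invocation of Theorem~\ref{theorem} together with the bialgebroid/weak-bialgebra dictionary of \cite{Sch}.
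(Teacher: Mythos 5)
Your proposal is correct and follows the same route the paper intends: the paper presents this theorem as "a formal consequence of Theorem \ref{theorem}," relying on exactly the chain you spell out ($R=F(I)$ separable Frobenius, $\tilde F$ strong monoidal with Cauchy image, $L$ a bialgebroid over a separable Frobenius base, hence a weak bialgebra by \cite{Sch}). The only difference is that you make explicit the routine verifications of faithfulness and exactness of $\tilde F$ via the factorization through $U$, which the paper leaves implicit.
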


The monoidal category of comodules over the weak bialgebra $L$ is defined by
regarding $L$ as a bialgebroid over a separable Frobenius algebra.

\end{document}